\definecolor{comcolor}{rgb}{0.9,0.3,0.3}
\definecolor{starcolor}{rgb}{0.3,0.3,0.9}
\definecolor{hscolor}{rgb}{0.9,0.6,0.5}
\definecolor{darkgreen}{rgb}{0.1,0.6,0.3}
\newtheorem{thm}{Theorem}[section]
\newtheorem{lemma}[thm]{Lemma}
\newtheorem{corollary}[thm]{Corollary}
\newtheorem{prop}[thm]{Proposition}
\theoremstyle{definition}
\newtheorem{rem}[thm]{Remark}
\newcommand{\be}[1]{\begin{equation}\label{#1}}
\newcommand{\ee}{\end{equation}}
\newcommand{\ba}{\begin{array}}
\newcommand{\ea}{\end{array}}
\newcommand{\bal}{\begin{aligned}}
\newcommand{\eal}{\end{aligned}}
\newcommand{\N}{\mathbb{N}}
\newcommand{\E}{\mathbb{E}}
\newcommand{\p}{\mathbb{P}}
\renewcommand{\P}{\mathbb{P}}
\newcommand{\cB}{\mathcal{B}}
\newcommand{\calC}{\mathcal{C}}
\newcommand{\cD}{\mathcal{D}}
\newcommand{\calE}{\mathcal{E}}
\newcommand{\cP}{\mathcal{P}}
\newcommand{\calT}{\mathcal{T}}
\newcommand{\Var}{\mathrm{Var}}
\newcommand{\cov}{\mathrm{Cov}}
\newcommand{\1}{1\hspace{-0.098cm}\mathrm{l}}
\newcommand{\eps}{\varepsilon}
\newcommand{\ra}{\rightarrow}
\newcommand{\DCM}{{\rm DCM}}
\newcommand{\WCM}{{\rm WCM}}
\begin{document}

\begin{center}
{\Large \bf 
The largest strongly connected component in\\[1mm] Wakeley et\ al's cyclical 
 pedigree model
}\\[5mm]

\vspace{0.7cm}
\textsc{Jochen Blath\footnote{Institut f\"ur Mathematik, Technische Universit\"at Berlin, Stra\ss e des 17. Juni 136, 10623 Berlin, Germany.}, Stephan Kadow\footnotemark[1] and Marcel Ortgiese\footnote{Institut f\"ur Mathematische Statistik,
Westf\"alische Wilhelms-Universit\"at M\"unster, Einsteinstra\ss{}e 62, 48149 M\"unster, Germany.
}
} 
\\[0.8cm]
{\small 28 May 2014} 
\end{center}

\vspace{0.3cm}

\begin{abstract}
  \noindent We establish a link between Wakeley et al's (2012) cyclical pedigree model from population genetics and a randomized directed configuration model (DCM) considered by Cooper and Frieze (2004).  
We then exploit this link in combination with asymptotic results for the in-degree distribution of the corresponding DCM to compute the asymptotic size of the largest strongly connected component $S^N$ (where $N$ is the population size)
of the DCM resp.\ the pedigree.
The size of the giant component can be characterized explicitly (amounting to approximately $80 \%$ of the total populations size) and thus 
contributes 
to a reduced `pedigree effective population size'. 
In addition,
 the second largest strongly connected component is only of size $O(\log N)$.
Moreover, we describe the size and structure of the `domain of attraction' of $S^N$.
In particular, we show that with high probability for any individual the shortest ancestral line
reaches $S^N$ after $O(\log \log N)$ generations, while almost all other ancestral lines take at most $O(\log N)$ generations.

  \par\medskip
\footnotesize
  \noindent \emph{2010 Mathematics Subject Classification}:
  Primary\, 60K35, 
  \ Secondary\, 92D10.%
  \par\medskip
\end{abstract}

\noindent{\slshape\bfseries Keywords:} Directed configuration model, giant component, random digraph, Wakeley et al's cyclical model, coalescence in a fixed pedigree, diploid monoecious Wright-Fisher model.

\section{Introduction}

In a recent article, Wakeley, King, Low and Ramachandran \cite{WKLR12} point out a conceptual flaw in standard coalescent theory when applied to diploid bi-parental organisms. They argue that even in the diploid single-locus case, one should treat the random pedigree as fixed (``quenched'') before letting genetic lineages randomly percolate through them. If the pedigree is unknown, one should average only afterwards. Indeed, at least for the first $\log_2 N$ steps (if $N$ is the population size), significant deviations from the `usual' coalescence probability of lineages, that is, $1/2N$, can be observed.  
However, Wakeley et al also point out, based on a simulation study, that the coalescent model is reasonably good \emph{after} $\log_2 N$ generations. 
More formally, they observe that
$$
\mathbb E \big[{ P}_{\Psi^N}\{\mbox{ two coalesce in generation $k$ } \big| \mbox{ not coalesced in gen $k-1$ }\}\big] \approx \frac{1}{2N}, 
$$
if $k \gg \log_2 N$,
where the inner probability $P_{\Psi^N}$ is taken over the genetic lineages within a \emph{fixed pedigree} $\Psi^N$, while the outer 
expectation averages over such randomly sampled pedigrees.
Moreover, they observe that for $k \ll \log_2 N$, the  expectation deviates drastically from $\frac{1}{2N}$.
Their observation thus hints at a `cut-off phenomenon' for the coalescence probability of random walks on fixed pedigrees after $\log_2 N$ steps. 

In order to isolate the different sources of randomness in their modeling framework, and in order to facilitate a mathematical analysis, they suggest a simplification, namely their so-called `cyclical model' (WCM), in which the exact parental relations in generation 1 are 
re-used to determine the parental relationships in all other previous generations. 

However, note that depending on the realisation of the random ancestral relationships, 
the cyclical model can yield a \emph{disconnected} pedigree, such that some sampled lineages might not coalesce at all. Indeed, although in the majority of simulations Wakeley et al do observe that the coalescence probability for two lineages is close to the Kingman coalescent after $\log_2 N$ steps, in several cases the two lineages fail to coalesce even after a very large number of steps due to sampling them from different connected components (cf.\ their comment on p.~1435 last line in \cite{WKLR12}). 

This fact is intuitively obvious and in line with recent general results of Cooper and Frieze~\cite{CF12}. More precisely, below we will see that the cyclical model can be interpreted as a \emph{directed random graph}, and it is shown in~\cite{CF12}
that a directed random graph with $N$ vertices and i.i.d.\ edge probability $p$ can only be ``completely connected'' (with high probability as $N \to \infty$) if 
$Np \to \infty$. This is not the case for the WCM and raises the question whether the cyclical model, although being mathematically attractive, is a reasonable `toy-model' at all. 

In this article, we thus investigate whether the random directed graph describing the ancestral relationships in Wakeley et al's model $a)$ at least has a `giant component' (with high probability) and $b)$ whether the vast majority of all other vertices have a path leading into this component after few (say at most $\log_2 N$) steps. To this end, we employ results from the theory of directed graphs due to Cooper and Frieze \cite{CF04}, and combine them with an asymptotic analysis of the in-degree distribution of the graph corresponding to WCM, to obtain affirmative and rather detailed answers to $a)$ and $b)$.
This also yields the existence of a unique stationary distribution of the simple random walk on the WCM (concentrated on the giant component).

Our results can be seen as a starting point for a mathematical analysis of the cut-off phenomenon in the cyclical case. Note however that the relevant mathematical literature on random walks on random \emph{directed} graphs (which are the relevant objects here) is much more sparse than for the case of undirected graphs; for a review of the latter see 
e.g.~\cite{RemcoNotes}. One of the few cases considered so far is treated in \cite{CF12}, though only for the rather different `strongly connected' regime ($Np \to \infty$) mentioned above. 
Investigating the presence of the cut-off phenomenon in Wakeley et al's cyclical model is thus an interesting problem
and will be treated in future research.

\section{Cyclical pedigrees as finite digraphs and the directed configuration model} 

\subsection{Definitions and construction}

We start by defining a `pedigree' formally. Suppose we are considering a diploid population of fixed size consisting of $N$ individuals. We label the individuals in generation $r\in \N_0$ as $(r,1), \ldots, (r,N)$, where we count generations going backwards in time starting from the present-time generation $0$.  
The \emph{pedigree} of the population is now modeled by assigning to each individual in generation $r$ two `parents' in generation $r+1$.
For simplicity, we consider a \emph{monoecious} population in which each parent can play the role of both the female and of the male (even simultaneously), so that it is possible that one individual represents both parents
in the previous generation. Abbreviating $[N] := \{1 , \ldots, N\}$, a (mathematical) pedigree $\Psi^N$ is then a directed (multi-)graph on the vertex set
\[ 
V_{\Psi^N} = \{ (r,j) : r \in \N_0 , j  \in [N]  \}   , 
\]
with the property that each vertex $(r,j)$ in generation $r$ has two outgoing edges pointing towards vertices in generation $r+1$.

Wakeley et al.~\cite{WKLR12} suggest the following model that incorporates
the fact that possible genetic lineages 
are restricted to the same pedigree. Assume that a pedigree $\Psi^N$ is fixed. 
We now sample two individuals at random from the population
in generation~0 and trace back their (genealogical) ancestry. 
Mathematically, this corresponds to starting two random walks in (independently sampled) individuals, resp.\ vertices, from $V_{\Psi^N}$ in generation $0$. Then, each random walk can follow one of the precisely two out-going directed edges 
and thus chooses one of the (up to) two parents as specified by $\Psi^N$ with equal probability.
If the two random walks meet, then with probability~$\frac 12$ they coalesce, or otherwise
take different out-going edges to repeat this process until they meet again (this models the fact that we are considering a \emph{diploid} population so that if two genetic lineages meet in one individual they may still remain distinct).
The final aim is to understand the asymptotics of the coalescence probability per generation and the coalescence time in generations for `typical' underlying pedigrees.

Obviously there are many conceivable ways of obtaining/generating a pedigree $\Psi^N$. One
option is to take a deterministic pedigree obtained from data. 
For example, in~\cite{WKLR12} the authors 
consider a pedigree obtained from Swedish family history. However, since the underlying pedigree is often unknown, another possibility
is to sample the pedigree randomly. 
The most obvious model is the \emph{completely randomized model}. Here, in each generation 
every vertex $(r,j)$
samples (independently) two vertices $(r+1,k)$ and $(r+1, k')$ uniformly from the vertices
in generation~$(r+1)$. 
This variant introduces a lot of independent randomness and is thus rapidly mixing. It is at the opposite end of the spectrum in comparison to a deterministic pedigree. Note that some ``mixing properties'' of this model haven been considered in \cite{Chang}.

On the level of simulations it has been shown in~\cite{WKLR12} for various pedigree models
that the resulting (averaged) coalescence probabilities are close to those predicted by Kingman's coalescent after a relatively few number of generations.
In this note, we will be mostly interested 
in the \emph{cyclical pedigree model}, where the ancestral relationships are randomly sampled 
only for the present generation and then re-used for all previous generations. More precisely, for each ``present generation'' individual/vertex~$(0,j)$, we independently sample two individuals $(1,k), (1,k')$ in previous generation~$1$ uniformly 
at random and then connect $(0,j)$ to $(1,k)$ and to $(1,k')$. For each of the consecutive 
generations the ancestral relationships are simply copied from the starting generation. 
Formally, we connect $(r,j)$ to $(r+1,k)$ if and only if $(0,j)$ is connected to $(1,k)$.

The reason that we focus on this model is that we can now
reformulate the problem in terms of \emph{finite} random directed graphs with $N$ vertices. Indeed, the (a priori infinite) pedigree in the cyclical model can be identified with a (finite) directed graph $\WCM_N$ based on the vertices $\{1,\ldots, N\}$
by collapsing all vertices $\{ (r,j ) : r \in \N_0\}$ in the cyclical pedigree $\Psi^N$ into a single vertex $j$ in 
$\WCM_N$ and keeping only the edges from the first generation, see Figure~\ref{fig_cyclical}. Obviously, it is equivalent to run the coalescing random walk (describing the genealogies) on 
either $\WCM_N$ (and counting the number of steps resp.\ generations) or on the cyclical pedigree.\\

\begin{figure}[htp]
	\begin{center}
	\includegraphics[width=12cm]{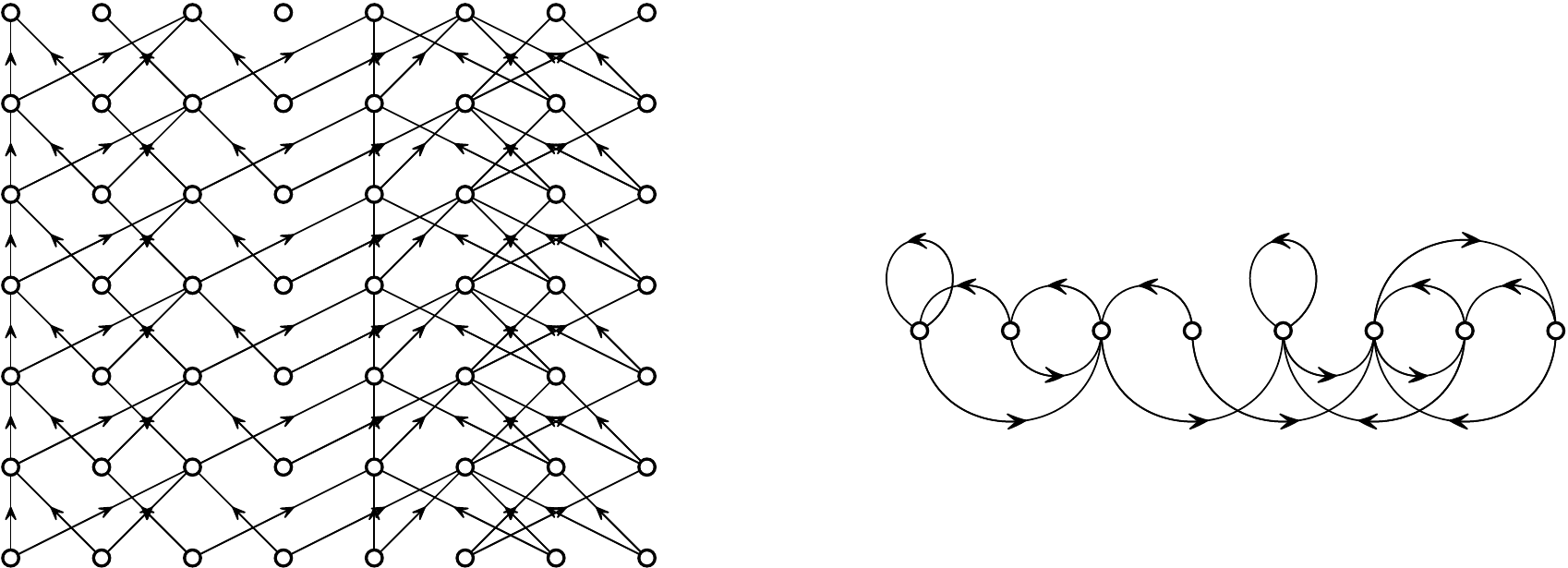}
	\end{center}
	\caption{A realization of the cyclical pedigree (left) and its associated directed graph (right).}
\label{fig_cyclical}
\end{figure}

Since it is possible that two vertices (which may be identical) are connected by two directed edges, we are formally dealing with a directed \emph{multigraph} on the vertex set $\{1, \ldots, N\}$, which we represent by an $N \times N$ array
$\WCM_N = (x_{ij})_{i,j \in [N]}$, where the $x_{ij}$ denote the number of directed
edges pointing from $i$ to $j$, respectively.

We now specify a mechanism to randomly generate cyclical pedigrees resp.\ the corresponding finite digraphs. Throughout the paper, we assume to work on a probability space $(\Omega, \mathcal F, \P)$. Let $(U_i^1, U_i^2)_{i \in [N]}$ be $2N$ independent uniformly distributed random variables on $\Omega$, each taking values in $[N]$. 
Then, if we interpret $U_i^1, U_i^2$ as the
`parents' of vertex $i$, we can realize $\WCM_N$ as 
\begin{equation}
\label{eq:WCM} 
\WCM_N = ( \1_{\{ U^1_i = j \}} + \1_{\{ U^2_i = j \}} )_{i,j \in [N]}. 
\end{equation}
Note that loops and double-edges, while allowed, will be relatively sparse in large populations.

\subsection{Cyclical pedigrees as random directed configuration models}

Our first aim is to show that we can interpret our random digraph $\WCM_N$ given by 
\eqref{eq:WCM} as a randomized version of a so-called \emph{directed configuration model} (DCM), which was popularized by Bollob\'as~\cite{B80} (in the undirected case)
and further investigated by Cooper and Frieze~\cite{CF04} in the directed case.
We first recall the definition of the model for a deterministic degree sequence. Fix $N \in \mathbb N$ and consider the finite vertex set $[N]$. Suppose that we are given so-called in- and out-degree sequences $(d^-_i, d^+_i)_{i \in [N]}$, where  
$d^-_i$ is the (finite) number of in-coming (directed) edges and $d^+_i$ the (finite) number of out-going edges for each vertex $i \in [N]$.  We fix the total number of edges $s\in \N$ and thus assume 
\begin{equation}
\label{eq:1601-1} \sum_{i \in [N]} d^-_i = \sum_{i \in [N]} d^+_i = s. 
\end{equation}
The following procedure generates a random graph, a so-called \emph{directed configuration}, with a pre-specified in- and out-degree sequence: Given $i \in [N]$ and $d^+_i$, consider the set of ``out-half-edges'' of vertex $i$ given by
$$
W^+_i := \big\{w_{i,1}^+, \dots, w_{i, d_i^+}^+\big\}, \quad i \in [N],
$$
and the set of all out-half-edges
$$
W^+ := \bigcup_{i=1}^{N} W_i^+.
$$ 
Similarly, define $W^-_i$ to be the set of in-half-edges of vertex $i$ (of cardinality $d_i^-$), and $W^-$ to be the union of all in-half-edges (of cardinality $s$). We proceed by uniformly matching out- and in-half-edges iteratively: Assume that the elements of $W^+$ are enumerated in some arbitrary order. Connect the first out-half-edge in $W^+$ to a uniformly chosen in-half-edge from $W^-$. Then, connect the second out-half-edge in $W^+$ to one of the (uniformly chosen) remaining in-half-edges, and so on. After $s$ steps, we arrive at a random directed graph $\DCM_N = \DCM_N(d^-_i, d^+_i)$, where
there is a directed edge pointing from vertex $i$ towards vertex $j$ in $\DCM_N$ if and only if one of the out-half-edges from $W^+_i$ 
has been paired up with an in-half-edge from $W^-_j$.

This iterative construction gives rise to nice independence properties within the directed configuration model that we will exploit later. It now seems intuitively obvious (nonetheless we will give a formal proof in Section~\ref{ssn:doublerand} taking care of combinatorial subtleties) that the cyclical model obtained from \eqref{eq:WCM} can be interpreted as a directed configuration model with $s=2N$ and a suitable random in-degree distribution:

\begin{prop}\label{prop:doublerand}
Fix $N \in \N$. Then, the random digraph $\WCM_N$ 
is equal in law to the random digraph $\DCM_N(d^-_i, d^+_i)$ where
$d^+_i := 2$ for all $i\in [N]$
and $d^-$ is distributed as a multinomial vector given by
\begin{equation}
\label{DCM_indegree}
d^- :=(Y_1, \dots, Y_N) \sim \mbox{\rm Mult} \Big(2N, \frac 1N\Big).
\end{equation}
\end{prop}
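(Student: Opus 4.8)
The plan is to reduce everything to the \emph{target map} $T$ that sends each of the $2N$ labelled out-half-edges $w_{i,\ell}^+$ (with $i \in [N]$, $\ell \in \{1,2\}$) to the vertex in $[N]$ that receives it, since both $\WCM_N$ and $\DCM_N(d^-,d^+)$ with $d^+ \equiv 2$ are deterministic functions of such a map via $x_{ij} = \#\{\ell : T(w_{i,\ell}^+) = j\}$. I would then prove the claim by conditioning on the in-degree sequence and checking two things: (i) the two marginal laws of the in-degree vector agree, and (ii) conditionally on a fixed in-degree vector, the two target maps have the same law.

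First I would treat the WCM side. Here $T(w_{i,\ell}^+) := U_i^\ell$, so the $2N$ values of $T$ are i.i.d.\ uniform on $[N]$, and the in-degree of $j$ is $Y_j = \#\{(i,\ell): U_i^\ell = j\}$. Counting allocations of $2N$ i.i.d.\ uniform balls into $N$ boxes gives $(Y_1, \dots, Y_N) \sim \mathrm{Mult}(2N, 1/N)$, matching \eqref{DCM_indegree}. Moreover, for any fixed assignment $t$ with occupancy vector $y$ one has $\P(T = t) = N^{-2N}$, independent of $t$; hence, conditionally on $\{Y = y\}$, the map $T$ is uniformly distributed over the $\binom{2N}{y_1, \dots, y_N}$ maps with occupancy $y$.

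Next I would treat the DCM side with a fixed in-degree sequence $d^- = y$ (and $d^+ \equiv 2$), where the digraph is built from a uniform perfect matching of the $2N$ out-half-edges to the $2N$ in-half-edges, and the induced target map sends $w_{i,\ell}^+$ to the vertex owning the in-half-edge it is matched to. The key counting step is that every target map $t'$ with occupancy $y$ is realised by exactly $\prod_{j} y_j!$ of the $(2N)!$ matchings: the $y_j$ out-half-edges sent to $j$ must biject onto the $y_j = d_j^-$ in-half-edges of $j$, giving $y_j!$ choices, independently across $j$. Since this multiplicity does not depend on $t'$, the induced target map is again uniform over maps with occupancy $y$ --- exactly the conditional law found on the WCM side. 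Combining this with the multinomial marginal for $d^-$ from step (i) then yields $\WCM_N \stackrel{d}{=} \DCM_N(d^-, d^+)$.

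I expect the only real obstacle to be bookkeeping rather than ideas: one must keep the out- and in-half-edges labelled throughout, so that loops $U_i^1 = i$ and double edges $U_i^1 = U_i^2$ are counted with the correct multiplicity, and verify that the passage from the target map to the array $x_{ij}$ is performed identically in both models. A cleaner but less transparent alternative would be to bypass the conditioning entirely and compute $\P(\WCM_N = x) = N^{-2N}\prod_i \big( 2!/\prod_j x_{ij}! \big)$ directly, then check that the DCM mixture over $y$ returns the same value; I would nonetheless keep the conditioning argument as the main line, since it isolates the single counting identity $\#\{\text{matchings realising } t'\} = \prod_j y_j!$.
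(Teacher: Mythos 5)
Your proposal is correct, but it takes a genuinely different route from the paper. The paper's proof is essentially the alternative you sketch in your last paragraph: it first proves a counting lemma giving the point probabilities $\P\{\DCM_N((d^-,d^+))=G\} = \prod_i d_i^-!\,\prod_i d_i^+!\,/\,(s!\,\prod_{i,j}x_{ij}!)$ for a general degree sequence, specialises to $d^+\equiv 2$ (where $\prod_{i,j}x_{ij}! = 2^{n(G)}$, with $n(G)$ the number of double edges), computes $\P\{\WCM_N=G\} = 2^{N-n(G)}N^{-2N}$ directly from the multinomial law of $(\1_{\{U_i^1=j\}}+\1_{\{U_i^2=j\}})_j$, and then checks that the mixture over the multinomial in-degree vector returns the same value, the factors $\prod_i d_i^-!$ and $(2N)!$ cancelling exactly. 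Your main line instead lifts both models to the common labelled object (the target map $T$ from out-half-edges to vertices), proves that conditionally on the occupancy vector $y$ both induced target maps are uniform on the $\binom{2N}{y_1,\dots,y_N}$ maps with that occupancy --- on the DCM side via the counting identity that each such map is realised by exactly $\prod_j y_j!$ of the $(2N)!$ matchings --- and then pushes forward under the same deterministic map $x_{ij} = \#\{\ell: T(w_{i,\ell}^+)=j\}$. Both arguments rest on the same underlying cancellation, but your coupling-by-conditioning version has the advantage that all multi-edge and loop bookkeeping (the $\prod_{i,j}x_{ij}!$ and $2^{n(G)}$ corrections) disappears automatically, since the target map retains half-edge labels and the passage to the multigraph is identical on both sides; the paper's version, in exchange, produces explicit formulas for the point probabilities of $\DCM_N$ and $\WCM_N$ (its equations \eqref{eq:1601-2} and \eqref{eq:1601-4}), which make the equality of laws visible by inspection. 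Your counting identity and its independence of $t'$ are stated and justified correctly, and the consistency check $\binom{2N}{y_1,\dots,y_N}\prod_j y_j! = (2N)!$ confirms it, so the argument is complete as proposed.
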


Note that in $\WCM_N$ the in-degree of a vertex $i \in [N]$ is given by
$$
Y_i := \sum_{k=1}^N \big( {\1}_{\{U_k^1=i\}} + {\1}_{\{U_k^2=i\}} \big) , 
$$
so that it is clear that $(Y_i)_{i \in [N]}$ necessarily has the distribution \eqref{DCM_indegree}.

The component structure of directed configuration models is rather well-understood.
Cooper and Frieze \cite[Section 1.3]{CF04} derive criteria which establish information about the existence and asymptotic size of a giant strongly connected component for the directed configuration model with a deterministic degree sequence.
Proposition~\ref{prop:doublerand} allows us to transfer these results  to the graph $\WCM_N$.

\subsection{Asymptotics for the empirical in-degree sequence of $\DCM_N$}

Consider a directed configuration model $\DCM_N(d^-_i, d^+_i)$ and assume that the
out-degree sequence is constant and equal to $d_i^+ \equiv 2, i \in [N]$. Then, the asymptotic size of the components of $\DCM_N$ (resp.\ the corresponding $\WCM_N$) will only depend on the \emph{empirical in-degree sequence}
$$
\xi^N:=(\xi_k^N)_{0 \le k \le 2N}, \quad \mbox{ where } \quad \xi_k^N:= \frac 1N \sum_{i=1}^N \1_{\{d_i^-=k\}}.
$$
The sequence $(\xi_k^N)_{0 \leq k \leq 2N}$ can be interpreted as the distribution of the in-degree of a uniformly chosen vertex in the graph.

In order to apply the results in~\cite{CF04}, we need  to determine the asymptotic law of the in-degree sequence as $N \to \infty$. Note that the marginal distribution of a multinomially distributed vector in~\eqref{DCM_indegree} is the binomial
distribution with success parameter $\frac{1}{N}$ and number of trials $2N$. In particular, in the limit $N \ra \infty$, the
in-degree distribution $\xi_N^k$ should be close to a Poisson$(2)$ distribution. 
We will state this elementary result in a slightly stronger form that we will need later on, namely as convergence in a suitable uniform $\ell^2$-weighted sense.

\begin{lemma}
\label{thm_1}
Define for convenience $\xi_k^N:=0$ for $k >2N$ and let $(\cP_k^2)_{k \in \N_0}$ be a Poisson$(2)$ distribution. Then, for any $\eps > 0$, as $N \to \infty$,
$$
\P\Big\{ \sum_{k=0}^\infty (k+1)^2 \big|\xi_k^N - {\cal P}^2_k\big| > \varepsilon \Big\} \to 0.
$$
\end{lemma}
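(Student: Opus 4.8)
The plan is to bound the probability by a first moment via Markov's inequality, and then to split the expected weighted discrepancy into a deterministic \emph{bias} part (comparing the binomial marginal to its Poisson limit) and a random \emph{fluctuation} part (the deviation of the empirical counts from their mean). I would write $M_k := \sum_{i=1}^N \1_{\{Y_i=k\}}$, so that $\xi_k^N = M_k/N$, and set $b_N(k) := \E[\xi_k^N] = \P(Y_1=k)$, the $\mathrm{Bin}(2N,1/N)$ mass function. Since the summand is nonnegative, Markov's inequality reduces the claim to showing
\[
\E\Big[\sum_{k=0}^\infty (k+1)^2 \,\big|\xi_k^N - \cP_k^2\big|\Big] \ra 0 .
\]
The triangle inequality bounds each summand by the bias $(k+1)^2\,|b_N(k)-\cP_k^2|$ plus the fluctuation $(k+1)^2\,\E|\xi_k^N - b_N(k)|$, and I treat the two resulting sums separately.

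For the bias, the inputs are the classical pointwise convergence $b_N(k)\ra\cP_k^2$ for each fixed $k$, together with convergence of the weighted total masses: a direct second-moment computation gives $\sum_k (k+1)^2 b_N(k) = \E[(Y_1+1)^2] = 11 - 2/N \ra 11 = \E[(P+1)^2] = \sum_k (k+1)^2\cP_k^2$, where $P\sim\mathrm{Poisson}(2)$. As $(k+1)^2 b_N(k)$ and $(k+1)^2\cP_k^2$ are nonnegative, pointwise convergence together with convergence of their sums yields, by Scheff\'e's lemma (on $\N_0$ with counting measure), that $\sum_k (k+1)^2\,|b_N(k)-\cP_k^2|\ra 0$.

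For the fluctuation I would use Jensen's inequality, $\E|\xi_k^N - b_N(k)| \le \sqrt{\Var(\xi_k^N)}$, and compute the variance from the exchangeability of the multinomial coordinates,
\[
\Var(\xi_k^N) = \frac1{N^2}\Big(N\,b_N(k)\big(1-b_N(k)\big) + N(N-1)\,\cov\big(\1_{\{Y_1=k\}},\1_{\{Y_2=k\}}\big)\Big).
\]
The main obstacle is controlling the covariance uniformly in $k$ so that the weighted sum of square roots actually vanishes rather than merely staying bounded. Writing $\cov(\1_{\{Y_1=k\}},\1_{\{Y_2=k\}}) = \P(Y_1=k,Y_2=k) - b_N(k)^2$ and comparing the joint (three-cell multinomial) probability with $b_N(k)^2$, I expect the bound $\P(Y_1=k,Y_2=k)\le b_N(k)^2\,(1-1/N)^{-2k}$ for $k\le N$, obtained by factoring the ratio into a combinatorial piece $\prod_{j=0}^{k-1}\frac{2N-k-j}{2N-j}\le 1$ and a probability piece bounded using $1-\tfrac2N\le(1-\tfrac1N)^2$; for $k>N$ the joint probability is zero, so the covariance is negative and harmless.

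Using $(1-1/N)^{-2k}-1 = O(k/N)$ on the range $k\le N$ then gives $\Var(\xi_k^N) \le b_N(k)/N + C\,b_N(k)^2\,k/N$, whence $\sqrt{\Var(\xi_k^N)} \le N^{-1/2}\big(\sqrt{b_N(k)} + C'\sqrt{k}\,b_N(k)\big)$ by subadditivity of the square root. Summing against $(k+1)^2$ and invoking the elementary uniform bound $b_N(k) = \binom{2N}{k}N^{-k}(1-1/N)^{2N-k} \le 2^k/k!$ (so that $\sum_k (k+1)^2\sqrt{b_N(k)}$ and $\sum_k (k+1)^3 b_N(k)=\E[(Y_1+1)^3]$ are both finite uniformly in $N$), the entire fluctuation sum is $O(N^{-1/2})\ra 0$. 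Combining this with the bias estimate gives the desired convergence, and the covariance control described above is the only genuinely delicate step.
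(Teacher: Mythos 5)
Your proof is correct, but it takes a genuinely different route from the paper's. The paper uses the same triangle-inequality split into a deterministic binomial-versus-Poisson part and a random fluctuation part, but then handles the fluctuations by telescoping the point masses into tail sums $\xi^N_{\ge k}$ and invoking the negative quadrant dependence of multinomial coordinates (so that $\cov(\1_{\{Y_i\ge k\}},\1_{\{Y_j\ge k\}})\le 0$, cited from the literature), which gives $\Var(\xi^N_{\ge k})\le \cB^N_{\ge k}/N$; it then finishes with Chebyshev plus a union bound over $k$ (splitting the tolerance as $\eps/2^{k+1}$) and Chernoff tail estimates. The passage to tail indicators is precisely what makes the cited dependence result applicable, since $\{Y_i=k\}$ is not a monotone event. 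You sidestep this by an explicit computation: comparing the two-cell multinomial joint probability with $b_N(k)^2$ yields $\cov(\1_{\{Y_1=k\}},\1_{\{Y_2=k\}})\le C\,b_N(k)^2 k/N$ --- a possibly positive but sufficiently small quantity --- and your factorization of the ratio (combinatorial piece at most $1$, probability piece at most $(1-1/N)^{-2k}$ via $1-2/N\le(1-1/N)^2$) checks out, as does the vanishing of the joint probability for $k>N$. Combined with Jensen, the uniform bound $b_N(k)\le 2^k/k!$, and Markov's inequality in place of term-by-term Chebyshev, this closes the argument with an explicit $O(N^{-1/2})$ rate in expectation. Your treatment of the deterministic part via Scheff\'e's lemma with the exact identity $\E[(Y_1+1)^2]=11-2/N\to 11$ is also cleaner than the paper's dominated-convergence-via-tail-bounds argument. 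What the paper's route buys is generality: its proof covers any increasing weight sequence $\ell_k\le e^{Ck}$ (a strengthening it states before the proof), whereas your Scheff\'e step as written is tied to the $(k+1)^2$ weight, though it would extend since the binomial moment generating functions converge to the Poisson one. What your route buys is self-containedness (no external negative-dependence result), a quantitative rate, and a shorter overall argument.
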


We will postpone the proof until Section~\ref{ssn:emp_in_deg}.
The only slight difficulty lies in controlling the correlations between the different observed $\xi_k^N, 0 \le k \le 2N$.

\newpage
\section{The component structure of $\WCM_N$}

In order to formulate our main results, we require some additional notation. 

Let $\{ \calE_N \}_{N \in \N}$ be a sequence of events on $(\Omega, \mathcal F)$. We say that
$\{ \calE_N \}_{N \in \N}$ occurs \emph{with high probability} (whp) if $\lim_{N \ra \infty} \p (\calE_N) =1$. 

Let $G^N$ be a finite directed graph with vertex set $[N]$. We define a \emph{strongly connected component} of $G^N$ to be a maximal subset $S^N$ of $[N]$
such that for each pair of vertices $u, v \in S^N$ there is a directed path (respecting the orientation of edges) from $u$ to $v$ \emph{and} from $v$ to $u$. 
 We say that a family of random graphs $(G^N)_{N \in \N}$ exhibits a \emph{giant strongly connected component} 
of asymptotic relative size $c \in (0,1]$, 
if 
$$
\frac{|S^N|}{N}\stackrel{\p}{\longrightarrow} c \quad \mbox{as } N \ra \infty . 
$$
where $|S^N|$ is number of vertices in $S^N$. 

Consider a Galton-Watson branching process with Poisson$(2)$ offspring distribution $(\cP_k^2)_{k \in \N_0}$.
Since this branching process is supercritical and since $\cP_0^2 > 0$, it is classical that it survives with probability $x^* \in (0,1)$, where $1-x^*$ is the unique fixed point of the probability generating function 
$f$ in $(0,1)$ given by
\begin{equation}
\label{eq:PoisPGF} f(x) = \sum_{k \in \N_0} \cP^2_k x^k = e^{2x - 2} . 
\end{equation}
Note that $x^* \approx 0.797$.
This fixed point will be important in our results below. 
Note that the same branching process plays a closely related role in the work of~\cite{Chang}, in which the time to a common recent ancestor in a bi-parental Wright-Fisher model is investigated.

Adapting the results of~\cite{CF04} for deterministic directed configurations to the random graph obtained from the cyclical model allows us to formulate the following statement about the structure of  $\WCM_N$.

\begin{thm}\label{thm:main}
Let $\WCM_N$ be the random directed graph 
obtained from Wakeley's cyclical model.
\begin{itemize}
\item[(i)] $\WCM_N$ exhibits a unique giant strongly connected component $S^N$
with asymptotic size 
$$
\frac{|S^N|}{N} \stackrel{\p}{\longrightarrow} x^*\quad \mbox{ as } N \to \infty,
$$
where $x^*$ is the survival probability of a Galton-Watson tree with Poisson$(2)$ offspring distribution. 
\item[(ii)] With high probability the following holds: For every vertex $v \notin S^N$, there is a directed path from $v$ to $S^N$ consisting of at most
$$
\frac{\log \log N}{\log 2} (1 + o(1)) 
$$
edges. However,  there are no directed 
edges pointing from $S^N$ to its complement.
Moreover,  the number of vertices that can be reached from $v$ without going through $S^N$ is bounded above
by 
\[ 
\frac{2}{-\log (4x^*(1-x^*))} \log N (1+o(1)) . 
\]
In particular, the same bound applies to the size of the second largest strongly connected component.
\end{itemize}
\end{thm}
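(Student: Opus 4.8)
The plan is to prove both parts by transferring the results of Cooper and Frieze~\cite{CF04} for deterministic directed configuration models to our setting via Proposition~\ref{prop:doublerand}. First I would condition on the realisation of the random in-degree sequence $d^-$, so that $\WCM_N$ becomes, in law, a directed configuration $\DCM_N(d^-_i,d^+_i)$ with the \emph{deterministic} out-degrees $d^+_i\equiv 2$, to which~\cite{CF04} applies directly. The key preliminary step is to check that, \whp, this random degree sequence satisfies the regularity hypotheses of~\cite{CF04}: convergence of the empirical in-degree distribution to the Poisson$(2)$ law $(\cP^2_k)$, together with a uniform control of its second moment and of the maximal in-degree. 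This is precisely the content of Lemma~\ref{thm_1}, whose $\ell^2$-weighting by $(k+1)^2$ is tailored to dominate the moment quantities entering the Cooper--Frieze estimates; the maximal in-degree is $O(\log N/\log\log N)$ and thus harmless. Having verified the hypotheses on an event of probability tending to one, I would apply~\cite{CF04} conditionally and then average, noting that a conditional convergence in probability holding on such an event yields the claimed unconditional convergence.

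For part (i) the two relevant explorations decouple asymptotically, as is standard for configuration models. The \emph{forward} exploration from a vertex follows out-edges; since $d^+\equiv 2$ it is, in the tree-like regime, a deterministic binary branching process, which never becomes extinct, so every vertex reaches a positive fraction of $[N]$ forward and the giant out-component has relative size $1$. The \emph{backward} exploration follows in-edges; because $d^+$ is constant, a vertex reached backward is an asymptotically uniform vertex with in-degree distributed as $\cP^2$, so this exploration is a Galton--Watson process with Poisson$(2)$ offspring, which survives with probability $x^*$. A vertex lies in the giant strongly connected component iff it both reaches and is reached from a positive fraction, an event of asymptotic probability $1\cdot x^*=x^*$; this is exactly the Cooper--Frieze formula specialised to our degrees and gives $|S^N|/N\stackrel{\p}{\to}x^*$.

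For part (ii) I would first record that $S^N$ is forward-closed: if $u\in S^N$ and $u\to w$, then since the forward cone of $w$ is infinite and meets the positive-fraction backward cone of $u$, the vertex $w$ can reach $u$, whence $w\in S^N$; thus no edge leaves $S^N$, and in particular the forward cone of every vertex eventually enters $S^N$. For the distance bound, the forward ball of radius $k$ around any vertex contains $\approx 2^k$ vertices, each of which lies in $S^N$ with probability bounded away from $0$; choosing $k$ so that $2^k\sim C\log N$ makes the probability that a single such ball avoids $S^N$ smaller than $N^{-1-\delta}$, and a union bound over the $N$ vertices shows that \whp\ every $v$ reaches $S^N$ within $k=\frac{\log\log N}{\log 2}(1+o(1))$ steps. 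For the size of the domain of attraction, I would analyse the forward exploration \emph{killed} upon entering $S^N$: each explored vertex has two out-edges, and a vertex reached along one of them lies in the complement of $S^N$ with probability $q=1-x^*$. Here the only genuinely delicate computation arises: the reached vertex is size-biased by its in-degree, so its in-degree is distributed as $1+$Poisson$(2)$, but the incoming edge does not contribute to its backward cone, leaving a Poisson$(2)$ number of fresh backward subtrees, and the fixed-point relation $1-x^*=e^{-2x^*}$ (from $f(1-x^*)=1-x^*$) makes the size-biasing cancel to give exactly $q=1-x^*$. The killed exploration is therefore a subcritical branching process with $\mathrm{Bin}(2,1-x^*)$ offspring, whose total-progeny tail decays at the rate $\min_{s>0}\phi(s)/s=4x^*(1-x^*)$ with $\phi(s)=(x^*+(1-x^*)s)^2$. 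A union bound over the $N$ possible starting vertices then bounds the number of vertices reachable without passing through $S^N$ by a constant multiple of $\log N/(-\log(4x^*(1-x^*)))$, the precise constant following from the large-deviation estimates of~\cite{CF04}. Finally, any strongly connected component other than $S^N$ is contained in the complement of $S^N$ and is, from any of its vertices, reachable within itself without meeting $S^N$, so the same bound controls the size of the second largest strongly connected component.

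The main obstacle is the transfer step rather than the branching heuristics: one must verify that the \emph{random} degree sequence meets the regularity assumptions of~\cite{CF04} uniformly enough (this is where Lemma~\ref{thm_1} and its second-moment control are essential) and then carry the conditional conclusions through the averaging over $d^-$. A secondary but subtle point is the size-biasing bookkeeping that pins down the continuation probability $q=1-x^*$, and hence the exact exponential rate $4x^*(1-x^*)$; obtaining the constant, as opposed to merely the $O(\log N)$ order, requires faithfully tracking the configuration-model matching rather than relying on the naive branching picture.
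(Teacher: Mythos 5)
Your proposal is correct and takes essentially the same route as the paper: reduce to a directed configuration model with a proper degree sequence (Proposition~\ref{prop:doublerand}, Lemma~\ref{thm_1}, plus a maximal in-degree bound), invoke Cooper--Frieze for part (i) together with convergence of the fixed points of the empirical generating functions, and prove part (ii) via forward-closedness of $S^N$, an exploration ball of size $(\log N)^{1+\eps}$ which must hit the $\approx 2x^*(1-x^*)N$ in-half-edges pointing into the giant (giving the $\log\log N$ bound), and a killed exploration dominated by a subcritical $\mathrm{Bin}(2,1-x^*)$ branching process whose total-progeny tail yields the rate $-\log(4x^*(1-x^*))$. The only point to correct is attribution: the quantitative progeny-tail estimates are \emph{not} in~\cite{CF04} (the paper derives them itself, via the Otter--Dwass formula and Cram\'er's theorem, and obtains the continuation probability $\approx 1-x^*$ by counting unused in-half-edges incident to $R^+(u)$ rather than by your size-biasing cancellation), but since your computation $\min_{s>0}\phi(s)/s=4x^*(1-x^*)$ supplies exactly the needed estimate, this gap is bibliographic rather than mathematical.
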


\begin{rem}[Shape of the complement of $S^N$] 
Theorem~\ref{thm:main} gives a rather detailed description of the directed graph $\WCM_N$, 
see also Figure~\ref{fig_giant}.
Outside the giant component there are many small components of at most logarithmic size, whose vertices 
have connections leading quickly towards the giant component.
 Moreover, as expected for sparse random graphs, 
 we will see in the proofs in Section~\ref{ssn:adaptation}
that the local neighbourhood of a typical vertex has an almost tree-like structure.
In particular, we show that the number of cycles obtained after tracking the out-going edges for the first few generations starting in a particular vertex is negligible. 
Any long cycles that ensure overall connectivity are contained within the giant component.
\hfill $\diamond$
\end{rem}

\begin{figure}[htp]
	\begin{center}
	\includegraphics[width=10cm, clip=true, trim=0 2cm 0 1cm]{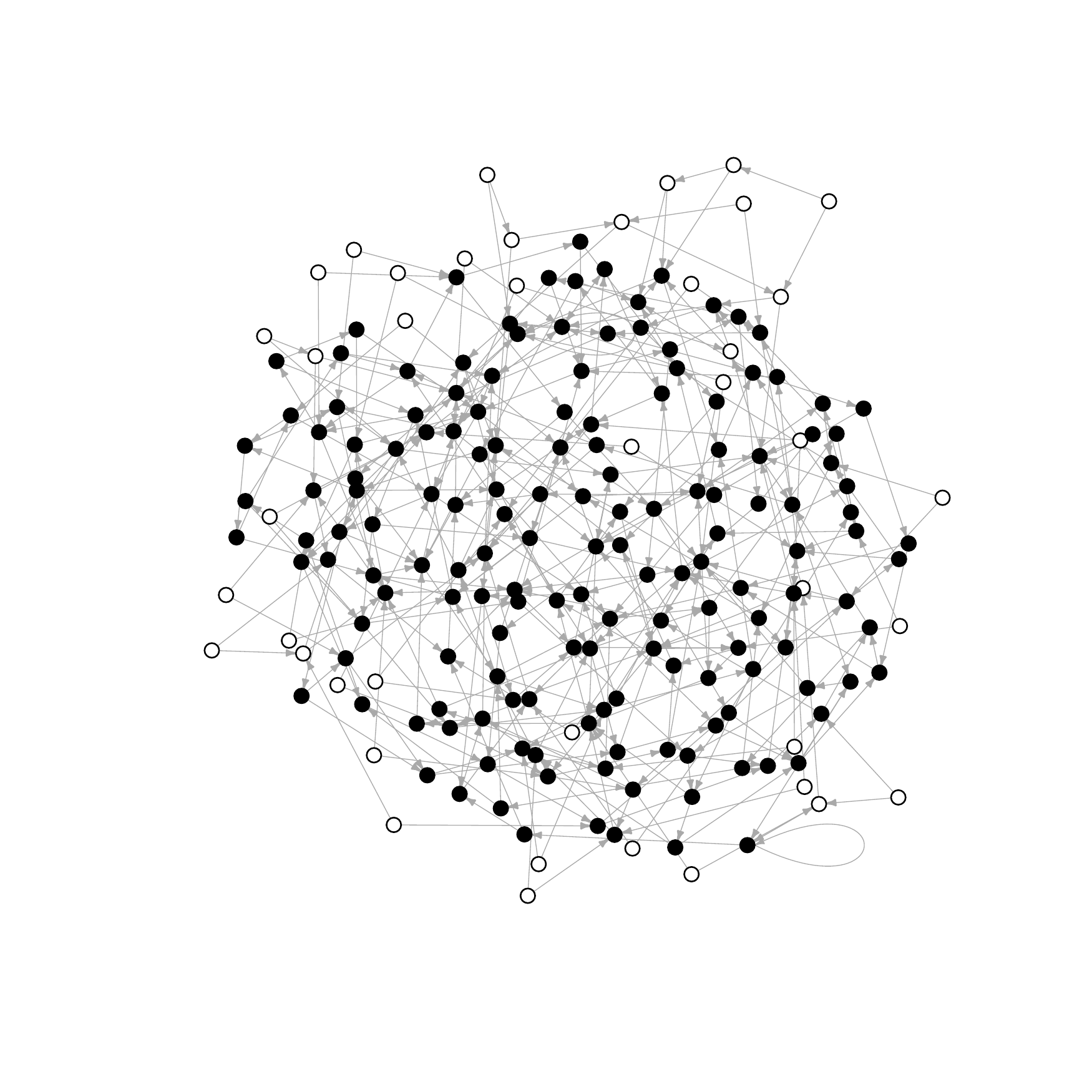}
	\end{center}
	\caption{A simulation of the directed graph $\WCM_N$ for $N = 200$ using
	the igraph package for R, see {\tt igraph.org}. The vertices in the 
	giant strongly connected component are black, while the vertices in the complement are white.}
\label{fig_giant}
\end{figure}

Theorem~\ref{thm:main} has important implications for 
the analysis of the ancestral lineages modeled by random walks
within the pedigree.

\begin{corollary}
With high probability, the simple, symmetric random walk on the directed graph $\WCM_N$
has a unique stationary distribution. Moreover, the stationary distribution is supported on the giant strongly connected component.
\end{corollary}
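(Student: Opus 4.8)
The plan is to read the statement off from finite Markov chain theory, using the detailed structural description already supplied by Theorem~\ref{thm:main}. First I would make the random walk precise as a Markov chain on the state space $[N]$: from a vertex $i$ the walk moves along one of its two out-going edges, each chosen with probability $\tfrac12$, so its one-step transition matrix is $P = (x_{ij}/2)_{i,j\in[N]}$, where $x_{ij}$ is the number of directed edges from $i$ to $j$ in $\WCM_N$. The key observation is that $j$ is accessible from $i$ for this chain (i.e.\ reachable with positive probability in some number of steps) if and only if there is a directed path from $i$ to $j$ in $\WCM_N$. Consequently the communicating classes of $P$ coincide exactly with the strongly connected components of $\WCM_N$, and a class is closed (the walk cannot leave it) precisely when there are no directed edges pointing out of the corresponding component.

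Next I would invoke the standard fact that a finite Markov chain possesses a unique stationary distribution if and only if it has exactly one closed communicating class; in that case the stationary distribution is supported on that class (where, restricted to it, the chain is irreducible and hence admits a unique invariant law), while all states outside it are transient and carry zero mass. It therefore suffices to show that, \whp, $\WCM_N$ has exactly one closed strongly connected component, namely $S^N$.

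This is where Theorem~\ref{thm:main} does all the work. Part~(i) gives that $S^N$ is a (giant) strongly connected component, hence a single communicating class of $P$. The statement in part~(ii) that there are no directed edges from $S^N$ to its complement shows that $S^N$ is closed. Finally, the reachability statement of part~(ii) says that, \whp, every vertex $v\notin S^N$ admits a directed path into $S^N$; since $S^N$ is closed and maximal, no path can return from $S^N$ to $v$, so the set of states accessible from $v$ strictly contains the communicating class of $v$, which is therefore not closed and hence transient. On the high-probability event furnished by Theorem~\ref{thm:main}, $S^N$ is thus the unique closed communicating class, every other state is transient, and the chain has a unique stationary distribution supported on $S^N$.

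As for difficulties, there is no serious analytic obstacle here, since the combinatorial heavy lifting is already contained in Theorem~\ref{thm:main}. The only point requiring care is the faithful translation between the graph-theoretic language (strong connectivity, reachability, absence of out-edges from $S^N$) and the Markov-chain language (communicating classes, accessibility, closedness and transience), together with making sure that the relevant statements of part~(ii) are used on a single event whose probability tends to $1$. One should also confirm that loops and double edges---allowed but sparse by construction---do not affect this reasoning, which they do not, as they merely reweight the one-step transition probabilities without changing which vertices are accessible from which.
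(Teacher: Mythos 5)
Your proposal is correct and follows essentially the same route as the paper: both arguments use Theorem~\ref{thm:main}(ii) to conclude that, with high probability, $S^N$ is reachable from every vertex and has no out-going edges, making it the unique closed (essential) communicating class of the walk, and then invoke standard finite Markov chain theory (the paper cites \cite[Prop.~1.26]{LPW09}) for existence and uniqueness of the stationary distribution supported on that class. Your write-up merely makes explicit the translation between graph reachability and chain accessibility, which the paper leaves implicit.
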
 

\begin{proof} This follows immediately from the fact that, with high probability, the strongly connected component $S^N$ can be reached  from every vertex and there are no connections going out from $S^N$. Hence, $S^N$ is a unique essential communicating
class for the random walk, which guarantees uniqueness and existence of the stationary distribution, see e.g.~\cite[Prop.~1.26]{LPW09}.
\end{proof}

\begin{rem}[``Pedigree effective population size'']
Theorem~\ref{thm:main} also gives us a first indication on the mixing time.
In particular, part (ii) tells us that it is very likely that a random walk started in an arbitrary point on $\WCM_N$ ends up in the strongly connected
component after no more than $\frac{2}{-\log (4x^*(1-x^*))} \log N (1+o(1))$ steps. 
Reaching $S^N$ is obviously a  necessary condition to 
meet with another random walk started from a `different' part of the complement of $S^N$.
Since the size of the giant strongly connected component, in which coalescences happen, is 
approximately $x^*N$, i.e.\ to about $80 \%$
of the population, this indicates that we could observe a reduction to a `pedigree effective population' size $N_{e} < N$. However, it is not obvious how to quantify $N_e$. Working with $N_e=x^* N$ may be inadequate, since vertices in $S^N$ are biased towards higher in-degrees. \hfill $\diamond$
\end{rem}

On a heuristic level, it is not hard to understand the occurrence of branching processes in this context. Consider a uniformly chosen vertex $v$ in $\WCM_N$ and explore its `neighbourhood' respecting the direction of edges. This can be done in two \mbox{(time-)} directions:
either following only the out-going edges (in a breadth-first manner), leading to the \emph{fan-out} $R^+(v)$, i.e.\ all vertices that can be reached from $v$. Since the out-degree of each vertex is always $2$, at the beginning of the exploration one essentially sees a binary tree (whp). 
The second option is to follow the in-coming edges in reverse (time-) direction leading to 
$R^-(v)$, the \emph{fan-in} of vertex $v$.
Since the in-degree distribution of a uniformly chosen vertex is by Lemma~\ref{thm_1} 
approximately Poisson$(2)$, the fan-in initially resembles a Galton-Watson branching process
with a Poisson offspring distribution. In particular, the probability that the fan-in is large
is close to the survival probability $x^*$. 
As shown in~\cite{CF04}, the strongly connected component is essentially given by those vertices that
have both a large fan-in and a large fan-out.
In particular, in our set-up (where each fan-out is large whp), the strongly connected component consists of those vertices with a large fan-in, whose proportion is given by~$x^*$.

Part (i) of Theorem~\ref{thm:main} is a straight-forward application of the results in~\cite{CF04} for the deterministic case once we can show that our random degree sequences
 $(d^-,d^+)$ obtained from $\WCM_N$ are \emph{proper} in a suitable sense which we
will state and prove in Section~\ref{ssn:adaptation}. 
Part (ii) is a  strengthening of the techniques in~\cite{CF04} for our special case and
its proof will  be carried out in Section~\ref{ssn:strengthening}.

\section{Proofs}

\subsection{Proof of Proposition~\ref{prop:doublerand}}
\label{ssn:doublerand}

Recall that we can represent a directed (multi-)graph $G=G^N$ on the vertices $[N] := \{1, \ldots, N\}$ as a matrix $(x_{ij})_{i,j \in [N]}$,
where $x_{ij}$ denotes the number of (directed) edges pointing from vertex $i$ to $j$. In the following, we simply refer to $G$ as a graph.
Assume that we are given in- and out-degree sequence $(d^-_i, d^+_i)$ such that 
\begin{equation}
\label{eq:1601-1b} 
\sum_{i \in [N]} d^-_i = \sum_{i \in [N]} d^+_i = s , 
\end{equation}
where $s \in \N$ denotes the total number of edges.
As a first result we compute the probability that a (uniformly sampled) $\DCM_N$ corresponding to a given degree distribution $(d^-_j, d^+_j)_{j \in [N]}$
is equal to a given graph $G$.

\begin{lemma} 
Given a graph $G = (x_{ij})_{i,j \in [N]}$ such that 
\[ 
d_i^- = \sum_{j \in [N]} x_{ji} \quad\mbox{and}\quad d_i^+ = \sum_{j \in [N]} x_{ij} 
\]
 and such that~\eqref{eq:1601-1b} holds, we have that
\begin{equation}
\label{eq:1601-2} 
\p \big\{ \DCM_N ((d^-, d^+)) = G \big\}  = \frac{ \prod_{i \in [N]} d_i^-! \prod_{i \in [N]} d_i^+!}{s! \prod_{i,j \in [N]} x_{ij}!} . 
\end{equation}
\end{lemma}

\begin{proof} Recall that in the construction of $\DCM_N$, we find a configuration by uniformly matching the out-half-edges in 
$W^+$ with the in-half-edges in $W^-$.
Since each of the $W^-$ and $W^+$ contain $s$ half-edges, there are $s!$ possible directed configurations, 
so that
\[
\p \big\{ \DCM_N ((d^-, d^+)) = G \big\}  = \frac{N(G)}{s!} , 
\]
where $N(G)$ denotes the number of configurations that yield the same $G$. Therefore, we need to count $N(G)$. First we note that permuting the in-half resp.\ out-half-edges does not change the resulting graph. The number of these permutations is $\prod_{i \in [N]} d_i^-! \prod_{i \in [N]} d_i^+!$. However, if there are multiple edges between the vertices $i$ and $j$, then permuting these
does not give a new configuration. Therefore, we need to compensate correspondingly and thus we obtain
\[ N(G) = \frac{ \prod_{i \in [N]} d_i^-! \prod_{i \in [N]} d_i^+! }{ \prod_{i,j \in N} x_{ij}!} , \]
which gives the stated result.
\end{proof}

In our setting, for the graph $\DCM_N$, we can specialise to the situation that the total number of edges is $s = 2N$ and 
also $d^+_i = \sum_{j \in [N]} x_{ij} = 2$ for all $i$. Moreover, $x_{ij}$ can only take the values $\{0, 1, 2\}$ and thus
\begin{equation*} \prod_{i, j \in [N]} x_{ij}! = 2^{n(G)}, \end{equation*}
where $n(G)$ is the  number of 
vertices $i$ in $G$ that connect via two edges to another vertex.
Therefore, for our model, the probability in~\eqref{eq:1601-2} simplifies to
\begin{equation}\label{eq:1601-4}
\p \big\{ \DCM_N ((d^-, d^+)) = G \big\}  = \frac{ \prod_{i \in [N]} d_i^-! \,2^{N - n(G)} }{(2N)! }. 
\end{equation}

\begin{proof}[Proof of Proposition~\ref{prop:doublerand}]
As before denote by $(Y_i)_{i \in [N]}$ a vector that is multinomially distributed with parameters
$2N$ (number of trials) and success probabilities $\frac{1}{N}$ for each of the $N$ categories.

Then, we can state Proposition~\ref{prop:doublerand} as saying that for any $G = (x_{ij})_{i,j\in [N]}$,
\begin{equation}\label{eq:1601-3} \p \{ \WCM_N = G \} = \E \Big[ \left.\p \{ \DCM_N((d^-,2)) = G\}\right|_{d^-_i = Y_i} \Big]  , 
\end{equation}
where we only need to consider graphs $G$ with $\sum_{j} x_{ij} = 2$. 

First, we calculate the left-hand side. Recall from the construction of $\WCM_N$ in~\eqref{eq:WCM}  that $U_i^1, U_i^2$ are independent random variables that
are uniformly distributed on $[N]$ and denote the vertices  that vertex $i$ connects to
in $\WCM_N$. Thus, we know that
\[ 
\begin{aligned} \p \{ \WCM_N = G \} & = \prod_{i \in [N]} \p \big\{ \1_{\{U_i^1 = j\}} + \1_{\{ U_i^2 = j \}} = x_{ij} \forall j \in [N] \big\} \\
& =     \prod_{i \in [N]} \frac{2}{ \prod_{j \in [N]} x_{ij}! } \Big(\frac{1}{N}\Big)^2 
= \frac{2^N N^{-2N}}{ \prod_{i,j\in [N]} x_{ij}!} = 2^{N - n(G)} N^{-2N}, 
   \end{aligned}
\]
where we used that the random variable $\1_{\{U_i^1 = j\}} + \1_{\{ U_i^2 = j \}}$ is multinomially distributed with parameters $2$ for the number of trials for $N$ categories with success probability~$\frac{1}{N}$ each.

Now, we consider the right-hand side of~\eqref{eq:1601-3}. We note that we can express 
the in-degree as $d^-_i = \sum_{j \in [N]} x_{ji}$ in terms of the given graph $G$, so we obtain
using~\eqref{eq:1601-4}
\[ \begin{aligned} \E \Big[ \Big.\p \{ &\DCM_N((d^-,2))    = G \}\Big|_{d^-_i = Y_i} \Big] \\
& = 
\p \big\{ \DCM_N((d^-,2)) = G \big\} \p \big\{ Y_i = d^-_i \ \forall i \in [N] \big\} \\
& = 
\frac{\prod_{i \in [N]} d_i^-! \, 2^{N-n(G)}}{(2N)!} \ \frac{(2N)!}{\prod_{i \in [N]} d_i^-! } \Big(\frac{1}{N}\Big)^{2N}  
= 2^{N - n(G)} N^{-2N} . 
 \end{aligned} \]
Hence, we see that both sides of~\eqref{eq:1601-3} agree.
\end{proof}

\subsection{Convergence of the empirical in-degree sequence}\label{ssn:emp_in_deg}

In this section, we prove a slightly stronger version of  Lemma~\ref{thm_1}. We recall that
the empirical in-degree sequence is given as
\[ 
\xi^N_k = \frac{1}{N} \sum_{i= 1}^N \1_{\{ Y_i = k \}} ,
\]
where $(Y_i)_{i \in [N]}$ is multinomially distributed with parameters $2N$ and $\frac{1}{N}$.
We will show that the empirical degree distribution $(\xi^N_k)_{k \in \N_0}$ converges
to a Poisson$(2)$ distribution $(\cP_k^2)_{k \in \N_0}$ in the sense that
\[ 
\sum_{k \in \N_0} \ell_k| \xi^N_k  - \cP_k^2| \stackrel{\p}{\longrightarrow} 0 , \quad \mbox{as } N\ra \infty ,
\]
where $\ell_k$ is an increasing sequence such that $0 < \ell_k \leq e^{Ck}$ for some $C > 0$ and where we set $\xi^N_k = 0$ for $k > 2N$. This statement then easily implies Lemma~\ref{thm_1}.

\begin{proof}[Proof of Lemma~\ref{thm_1}]
We write $(\cB^N_k)_{k \in [2N]}$ for a binomial distribution with parameters $\frac 1N$ (success probability)
and $2N$ (number of trials).
First recall that by the Poisson limit theorem, for each $k$,
\begin{equation}\label{eq:Poisson}
\E\big[\xi_k^N\big] = \cB_k^N \to {\cal P}^2_k, \quad \mbox{as } N \ra \infty.
\end{equation}
To simplify notation abbreviate
$$
{\cal B}_{\ge k}^N:= \sum_{i=k}^{2N} {\cal B}_i^N \quad \mbox{ and } \quad \xi_{\ge k}^N := \frac 1N \sum_{i=1}^N \1_{\{Y_i \ge k\}}, \quad 0\le k \le 2N.
$$
We will need the following  straight-forward consequence of the Chernoff bound for i.i.d.\ Bernoulli variables, 
\begin{equation}\label{eq:chernoff} \cB_{\geq k}^N \leq  \frac{(2e)^k}{k^k} \quad \mbox{for all } k \in \{ 2, \ldots, 2N\} , \end{equation}
where the latter bound is the same as for the tail of a Poisson$(2)$ random variable.
Then, observe that
\begin{equation}\label{eq:2602-1} \sum_{k \in \N_0} \ell_k | \xi_k^N - \cP_k^2| \leq 
\sum_{k \in \N_0} \ell_k | \xi_k^N - \cB_k^N| + \sum_{k \in \N_0} \ell_k| \cB_k^N - \cP_k^2| . \end{equation}
Moreover, the second (deterministic) sum on the right hand side converges to $0$ by the Poisson approximation~\eqref{eq:Poisson} combined with the tail bound~\eqref{eq:chernoff} (and an analogous bound for the Poisson distribution). Therefore, it remains to show that the first sum on the right hand side of~\eqref{eq:2602-1} converges 
to $0$ in probability.
To this end, consider
\begin{equation}
\label{eq:2602-2}
\begin{aligned} \sum_{k \in \N_0} \ell_k |\xi_k^N - \cB_k^N| & \leq \sum_{k \in \N_0} \ell_k |\xi_{\geq k}^N - \cB_{\geq k}^N|
+ \sum_{k \in \N_0} \ell_k |\xi_{\geq k+1}^N - \cB_{\geq k+1}^N| \\
& \leq 2 \sum_{k \in \N_0}   \ell_k |\xi_{\geq k}^N - \cB_{\geq k}^N|, 
\end{aligned}
\end{equation}
where we used that $\ell_k$ is increasing.

We need to control the correlation of the variables $(Y_i)_{i \in [N]}$ used in the definition of  $\xi_k^N$.  However, it is well-known (see e.g.~\cite{JDP83}) that the components of a multinomially distributed vector are `negative quadrant dependent'  in the sense that for $i \neq j$
\[ 
\cov( \1_{\{Y_i \geq k \}} , \1_{\{ Y_j \geq k\}} ) \leq 0 , 
\]
for any $k  \in [2N]$. In particular, it follows that
\[\begin{aligned} \Var(\xi_{\geq k}^N) = \Var\Big( \frac{1}{N} \sum_{i =1}^N \1_{\{ Y_i \geq k \}}\Big) 
\leq \frac{1}{N^2} \sum_{i =1}^N \Var(\1_{\{Y_i \geq k\}} ) 
 \le \frac{{\cal B}^N_{\ge k}}{N}.
\end{aligned}\]
Hence, we find that by Chebyshev's inequality and the bound~\eqref{eq:chernoff} in the last step
\[ 
\begin{aligned} 
\p \Big\{ \sum_{k \in \N_0} \ell_k | \xi_{\ge k}^N - \cB_{\ge k}^N|  > \eps \Big\}  
& \leq \sum_{k \in \N_0}
\p \Big\{ | \xi_{\ge k}^N - \cB_{\ge k}^N | >  \frac{\eps}{2^{k+1}\ell_k} 
 \Big\} \\
& \leq \sum_{k \in \N_0} 2^{2k+2} 
(\ell_k)^2 \frac{1}{\eps^2} \Var( \xi_{\geq k}^N) \\
& \leq \frac{1}{\eps^2 N} \sum_{k \in \N_0} 2^{2k+2} 
e^{2Ck} 
\cB_{\geq k}^N \\
&\leq
 \frac{1}{\eps^2 N} \sum_{k \in \N_0} 2^{2k+2}  
e^{2Ck} 
 (2e)^k e^{-k \log k}, 
 \end{aligned}
 \]
which converges to $0$ as $N \ra \infty$ as the sum on the right hand is finite.
Therefore, we have completed the proof using~\eqref{eq:2602-1}
and~\eqref{eq:2602-2}.
\end{proof}

\subsection{Adapting the results from the directed configuration model}\label{ssn:adaptation}

The results in~\cite{CF04} are stated for a directed configuration 
model with \emph{deterministic} degree sequence $(d^-_i, d^+_i)_{i \in [N]}$ which is assumed to be sufficiently regular (`proper').
In our case, where the out-degree is constant, i.e.\ $d^+_i = 2$ for all $i \in [N]$, the regularity conditions can be simplified somewhat. In order to avoid extra notation, we state the definition only in this special case, although we emphasize that the results in~\cite{CF04} are more general.

As above denote by $(\xi_k^N)_{k = 0}^{2N}$ the in-degree distribution of a uniformly chosen vertex. 
Moreover, let $\Delta_N$ be the maximal out- or in-degree in the graph $\DCM_N$.
We consider a degree sequence $(d^-_i, d^+_i)_{i \in [N]}$ with $d^+_i = 2$ for all $i \in [N]$
(so that in particular the total number of edges is equal to $2N$). This degree sequence is called \emph{proper}, if the 
following conditions hold: 
\begin{itemize}
\setlength{\itemsep}{0pt}
\item[(C1)] There exists a constant $K > 0$ (not depending on $N$) such that
\[ 
\sum_{k \in [2N]} k^2 \xi_k^N \leq K\] 
\item[(C2)] The maximal degree is not too large in the sense that \[ \Delta^N \le \frac{N^{1/12}}{\log N}.
\]
\end{itemize}

\begin{rem} The original  definition in~\cite{CF04} for a degree sequence with a non-trivial out-degree 
also involves conditions on the total number of edges and the average directed degree that 
are trivially satisfied here. Moreover, the mixed moments of the in- and out-degree distribution should not be too large, 
which in our case is covered by~(C1).
\end{rem}

In order to transfer the results to the graph $\WCM_N$, we need to sample the in-degree sequence
$(d^-_i)_{i \in [N]}$
as described in Proposition~\ref{prop:doublerand} and then check that the resulting degree sequence
is proper with high probability.

\begin{lemma}
\label{proper}
Let $(d^-_i, d^+_i)_{i \in [N]}$ be the random degree sequence corresponding to
$\WCM_N$. Then
$$ 
\P \big\{\, (d^-_i, d^+_i)_{i \in [N]} \mbox{ is proper} \,\big\} \to 1 \quad \mbox{ as } \quad N \to \infty.
$$
\end{lemma}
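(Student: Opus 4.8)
The plan is to verify conditions (C1) and (C2) separately and to show that each holds with high probability; since a finite intersection of whp-events is again whp, this yields the claim. Throughout I write $Y_i$ for the random in-degree $d_i^-$, so that each $Y_i$ is marginally binomially distributed with parameters $2N$ and $1/N$, and I use that the out-degrees are deterministically equal to $2$; hence, once the threshold exceeds $2$, the maximal degree is $\Delta^N = \max_{i \in [N]} Y_i$.

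For (C1) I would read the bound off directly from Lemma~\ref{thm_1}. Writing $Z$ for a Poisson$(2)$ variable, the second moment of the limiting law is $\sum_{k} k^2 \cP_k^2 = \E[Z^2] = \Var(Z) + (\E Z)^2 = 6$. Since $k^2 \le (k+1)^2$, the triangle inequality gives
$$
\sum_{k \in [2N]} k^2 \xi_k^N \le \sum_{k \in \N_0} (k+1)^2 \big| \xi_k^N - \cP_k^2 \big| + \sum_{k \in \N_0} k^2 \cP_k^2 .
$$
The first term on the right tends to $0$ in probability by Lemma~\ref{thm_1}, and the second equals $6$, so with high probability the left-hand side is at most $7$. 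Thus (C1) holds whp with, say, $K = 7$.

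For (C2) I would use a union bound together with the Chernoff tail bound~\eqref{eq:chernoff}. Setting $k_N := \lceil N^{1/12}/\log N \rceil$ (which lies in the admissible range $\{2,\dots,2N\}$ for all $N$ large), the event $\{\Delta^N > N^{1/12}/\log N\}$ is contained in $\{\exists\, i : Y_i \ge k_N\}$, so that
$$
\P\Big\{ \Delta^N > \tfrac{N^{1/12}}{\log N} \Big\} \le N\, \P\{ Y_1 \ge k_N\} = N\, \cB_{\ge k_N}^N \le N\, \frac{(2e)^{k_N}}{k_N^{\,k_N}} = \exp\!\big( \log N + k_N(1 + \log 2 - \log k_N)\big).
$$
Since $\log k_N = \tfrac{1}{12}\log N - \log\log N + o(1) \to \infty$, the dominant contribution to the exponent is $-k_N \log k_N \sim -N^{1/12}/12$, which overwhelms $\log N$; hence the probability tends to $0$ and (C2) holds whp.

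Both conditions depend only on marginal tails (through the union bound) and on the weighted convergence already established in Lemma~\ref{thm_1}, so no further control of the multinomial correlations is needed here — that difficulty was absorbed into the proof of Lemma~\ref{thm_1}. The only genuinely quantitative point is the verification in (C2) that the particular threshold $N^{1/12}/\log N$ grows slowly enough for the union bound to survive yet fast enough that $k_N \log k_N$ dominates $\log N$; this is the step I would be most careful about, although the Chernoff bound~\eqref{eq:chernoff} makes it routine.
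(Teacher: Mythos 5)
Your proposal is correct and follows essentially the same route as the paper: condition (C1) is read off from Lemma~\ref{thm_1} together with the finite second moment of the Poisson$(2)$ law, and condition (C2) is verified by a union bound over the $N$ exchangeable in-degrees combined with the Chernoff tail bound~\eqref{eq:chernoff}. The only (immaterial) difference is that the paper plugs the threshold $\log N$ into the same union-bound/Chernoff argument, thereby proving the stronger statement $\P\{\Delta^N \le \log N\} \to 1$, whereas you work directly with the threshold $N^{1/12}/\log N$ required by (C2).
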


\begin{proof}
Recall that the in-degree sequence is given by $d^-_i =: Y_i$, where $(Y_i)_{i \in [N]}$ is multinomially distributed with parameters $2N$ and $\frac{1}{N}$. 

The first condition (C1) follows from the convergence of the empirical in-degree distribution
stated in Lemma~\ref{thm_1} and since a Poisson$(2)$ random variable
has finite mean and variance~$2$.

For the second condition (C2), we show the much stronger statement that 
\begin{equation}
\label{fourth}
\P \big\{ \Delta^N \le \log N \big\} \to 1 \quad \mbox{ as } N \to \infty.
\end{equation}
Since the out-degree for each vertex in $\WCM_N$ is always 2, we can concentrate on the in-degrees. Note that, by exchangeability and by the Chernoff 
bound as in~\eqref{eq:chernoff}
$$
\P \Big\{ \max_{1 \le i \le N} Y_i > \log N\Big\} \le N \P \big\{Y_1 > \log N\big\} \le \frac{N^{\log 2 +2}}{ N^{\log \log N}} \to 0
$$
as $N \to \infty$, which implies \eqref{fourth}.
\end{proof}

\begin{proof}[Proof of Theorem~\ref{thm:main}(i)]
Lemma~\ref{proper} together with Proposition~\ref{prop:doublerand} allow us to 
apply~\cite[Theorem~1.2]{CF04}, which shows that there is a unique giant strongly connected component~$S^N$, 
which satisfies for any $\eps > 0$,
\[ 
\p \big\{ (1-\eps) x_N^* N \leq |S^N|  \leq (1+\eps) x_N^* N \big\} \ra 1 \quad \mbox{as } N \ra \infty . 
\]
Here, $1-x_N^* $ is the extinction probability of a Galton-Watson process with offspring distribution $(\xi_k^N)_{k \in [N]}$, 
which is given by the unique fixed point in $(0,1)$ of the probability generating function of $(\xi_k^N)$
\[ 
f^N(x) = \sum_{k \in \N_0} \xi_k^N x^k . 
\]
However, by Lemma~\ref{thm_1}, we have that if $f$ is the probability generating function of the Poisson$(2)$ 
distribution as defined in~\eqref{eq:PoisPGF}, then 
\[ \sup_{x \in [0,1]} | f(x) - f^N(x)| \leq \sum_{k \in \N_0} | \xi_N^k - \cP^2_k| \stackrel{\p}{\longrightarrow} 0 . \]
Since the same uniform convergence holds for $(f^N)' \ra f'$, we can easily deduce that 
the fixed point $1-x^*_N$ of $f^N$ converges in probability to $1-x^*$, the unique fixed point of $f$ in $(0,1)$. Therefore, we have shown part (i) of Theorem~\ref{thm:main}.
\end{proof}

\subsection{Extensions for constant out-degree}\label{ssn:strengthening}

In order to prove the  second part of Theorem~\ref{thm:main}, we need to 
recall further details from the proof strategy of~\cite{CF04}.
In the following, we will assume that the degree sequence is fixed, satisfies $d^+_i = 2$ for all $i \in [N]$ and that it is proper in the above sense, i.e.\ (C1) and (C2) hold.

Recall that for a given vertex $v$ its \emph{fan-out} $R^+(v)$ consists of all vertices
that can be reached from $v$ along directed paths. Similarly, its fan-in $R^-(v)$ consists 
of all vertices $u$ so that $v$ can be reached from $u$. One reason why the configuration model
is so amenable to analysis is that the corresponding digraph can be constructed by consecutively exploring the fan-outs of its vertices step-by-step.  Indeed, for any vertex one starts to
build up its forward neighbourhood by first uniformly and independently matching its out-half-edges with the in-half-edges of the next generation of vertices, then the out-half-edges of those vertices (say in a breadth-first manner) and so on. 
Upon completion of the forward neighbourhood $R^+(v)$, the remainder of the graph can be constructed by independently repeating this procedure for the remaining vertices and the remaining half-edges.

A central result, see~\cite[Theorem~2.1]{CF04}, is the following dichotomy, where $1-x^*_N$ is the extinction probability
of Galton-Watson process with offspring distribution $(\xi_N^k)$ (as above), which we combine
with the statement of~\cite[Lemma 4.3]{CF04}.

\begin{prop}[\cite{CF04}]\label{prop:F1} There exists a constant $A_0$, such that with high probability, the following holds:
\begin{itemize} 
	\item[(i)] For all vertices $v$, the number of edges in $R^+(v)$
	is either less than $A_0 (\Delta^N)^2 \log N$, or equal to $2x_N^* N + O(\Delta^N \sqrt{ N\log N})$.
	Moreover, the number of edges in $R^-(v)$
	is either less than $A_0 (\Delta^N)^2 \log N$ or equal to $2  N + O(\Delta^N \sqrt{N \log N})$.
	\item[(ii)] If $|R^+(u)|, |R^-(v)| > A_0 (\Delta^N)^2 \log N$, then $R^+(u) \cap R^-(v) \neq \emptyset$.
\end{itemize}
\end{prop}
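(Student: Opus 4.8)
The plan is to exploit the sequential, ``reveal-as-you-explore'' construction of the directed configuration model described above: because each out-half-edge is matched to a uniformly chosen unused in-half-edge, the breadth-first exploration of a fan-out $R^+(v)$ (resp.\ a fan-in $R^-(v)$) can be generated one matching at a time, and at each step the newly discovered vertices contribute their out-degrees (resp.\ in-degrees) as fresh half-edges. First I would set up the coupling of these two explorations with Galton--Watson branching processes. In the forward direction each matched out-half-edge lands on a vertex of out-degree $d^+\equiv 2$, so the forward process is essentially a binary branching process, supercritical and surviving with probability tending to $1$; in the backward direction each matched in-half-edge lands on a vertex whose in-degree is, by Lemma~\ref{thm_1}, asymptotically Poisson$(2)$-distributed, so the backward process is a Galton--Watson process with offspring generating function $f^N$ and survival probability $x_N^*$. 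The coupling stays valid as long as the explored set is much smaller than $N$, so that sampling-without-replacement depletion of the half-edge pool is negligible; condition~(C2) on $\Delta^N$ is what keeps this error controlled.

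For part~(i) I would establish the \emph{dichotomy} from this coupling. Running the exploration, with high probability it either dies out quickly --- in which case $R^+(v)$ (resp.\ $R^-(v)$) contains fewer than $A_0(\Delta^N)^2\log N$ edges, the threshold being chosen just large enough for the branching approximation to remain accurate --- or it survives past this threshold, in which case it must continue to grow until it exhausts the set of vertices it can reach. To identify that terminal size and its fluctuations, I would track the number of as-yet-unmatched half-edges by a Doob martingale and apply the Azuma--Hoeffding inequality. Each matching step changes the relevant counts by at most $O(\Delta^N)$ and there are $O(N)$ steps, so the bounded-difference bound yields deviations of order $\Delta^N\sqrt{N\log N}$, where the extra $\sqrt{\log N}$ absorbs a union bound over all $N$ starting vertices. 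The means are pinned down by the branching heuristic: a surviving forward exploration reaches exactly the vertices that themselves have a large fan-in, a proportion $x_N^*$, giving $2x_N^* N$ edges, whereas a surviving backward exploration reaches essentially every vertex (since every vertex has a large fan-out), giving $2N$ edges. This is precisely the asymmetry in the two stated values.

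For part~(ii) I would argue by a collision (birthday-paradox) estimate. Reveal $R^+(u)$ first; conditionally on $|R^+(u)|>A_0(\Delta^N)^2\log N$ it occupies a set $S$ of that many half-edges. Now reveal $R^-(v)$: if it were disjoint from $R^+(u)$, then each of its $\gtrsim A_0(\Delta^N)^2\log N$ matching steps would have to avoid $S$, and since every match is to a uniformly chosen remaining half-edge, the probability of avoiding a set of size $\gtrsim(\Delta^N)^2\log N$ throughout $\gtrsim(\Delta^N)^2\log N$ steps is super-polynomially small in $N$ --- small enough to beat a union bound over all $N^2$ pairs $(u,v)$. Here too the maximal-degree bound~(C2) ensures that sampling without replacement only perturbs these avoidance probabilities negligibly.

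The main obstacle is the \emph{uniformity} in both statements: ``for all vertices'' in~(i) and ``for all pairs'' in~(ii) force exponential-in-$\log N$ tail control so that union bounds of size $N$ and $N^2$ can be afforded, and this must be reconciled with the genuine dependencies created by matching half-edges without replacement. Keeping the branching coupling and the martingale differences simultaneously under control, precisely in the intermediate regime where the fan is neither clearly dead nor clearly of size $\Theta(N)$, is the delicate heart of the argument, and it is exactly here that the degree bound $\Delta^N\le N^{1/12}/\log N$ of~(C2) is used.
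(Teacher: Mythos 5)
First, a point of reference: the paper does not prove this proposition at all --- it is imported verbatim from Cooper and Frieze (\cite[Theorem~2.1]{CF04} combined with \cite[Lemma~4.3]{CF04}), so your attempt has to be measured against their argument rather than against anything in this paper. Your plan for part~(i) --- generate $R^+(v)$ and $R^-(v)$ by the sequential matching, couple the forward exploration to (essentially deterministic) binary branching and the backward exploration to a Galton--Watson process with offspring law close to Poisson$(2)$ (the constant out-degree $2$ makes the usual size-biasing trivial), then pin down the terminal size by a martingale/Azuma argument with increments $O(\Delta^N)$ --- is a fair reconstruction of that strategy, including the correct explanation of the asymmetry between $2x^*_NN$ and $2N$. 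It is only a sketch, and the step it defers (that an exploration surviving past $A_0(\Delta^N)^2\log N$ must in fact grow to linear order) is the real work in \cite{CF04}, but as a plan it is sound.

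Part~(ii), however, contains a genuine gap, in fact two. Quantitatively: avoiding a set of roughly $m= A_0(\Delta^N)^2\log N$ half-edges in each of $m$ uniform matchings out of about $2N$ has probability about $(1-m/2N)^m\approx\exp(-m^2/2N)$, and under (C2) one has $m\le A_0N^{1/6}/\log N=o(\sqrt N)$, so this probability tends to $1$, not to $0$; it is nowhere near super-polynomially small, and your union bound over $N^2$ pairs collapses. A birthday-paradox collision only becomes effective once both explored sets have size of order $\sqrt{N\log N}$, which is exactly why part~(ii) must be deduced \emph{via} the dichotomy of part~(i) (a fan exceeding the logarithmic threshold passes through every intermediate scale on its way to linear size), and why the fluctuation term in (i) is $\Delta^N\sqrt{N\log N}$. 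Structurally: your conditioning scheme (``reveal $R^+(u)$ fully, then reveal $R^-(v)$'') cannot work even in principle. Since every out-edge of a vertex of $R^+(u)$ stays inside $R^+(u)$, after the full reveal all out-half-edges of $R^+(u)$ are matched internally; the backward exploration of any $v\notin R^+(u)$ matches pending in-half-edges only to \emph{unmatched} out-half-edges and therefore can never discover a vertex of $R^+(u)$. Indeed $R^+(u)\cap R^-(v)\neq\emptyset$ holds if and only if $v\in R^+(u)$, an event already determined once $R^+(u)$ is exposed, so in your set-up the ``avoidance'' you try to bound has conditional probability one and there is no per-step randomness left to exploit. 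The repair is to interleave the two explorations: pause each one either when it dies (then it is small and the claim is vacuous) or when its frontier of pending half-edges has size of order $\Delta^N\sqrt{N\log N}$, and then show that completing the matching forces a frontier-to-frontier collision with probability $1-O(N^{-3})$; this is the argument that survives the union bound, and it is also the mechanism behind the way the present paper uses the proposition (e.g.\ in Proposition~\ref{prop:ext} and Lemma~\ref{le:F2}).
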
 
We say that a vertex has a \emph{small fan-in} (resp.\ fan-out) if the first (logarithmic) bound holds and otherwise say it has a \emph{large fan-in} (fan-out). 
As we will recall in the proof of Proposition~\ref{prop:ext}, 
these two properties allow us to show that with high probability, the maximal strongly connected component
consists of those vertices with a large fan-in and a large fan-out.

For the proof of Theorem~\ref{thm:main}, we also need the following technical result.
	
\begin{lemma}[{\cite[Lemma 5.1]{CF04}}]\label{le:F2}
With high probability, simultaneously for all vertices $v$:
\begin{itemize}
\item[(i)]  In the first $(\Delta^N \log N)^2$ steps of the (breadth-first) forward exploration process of $R^+(v)$ described above, in all but possibly one step a new vertex is added.
\item[(ii)] If $R^+(v)$ is large, then the number of edges pointing towards $R^+(v)$ from its complement
 is 
$2x^*(1-x^*)N(1+o(1))$.
\end{itemize}
\end{lemma}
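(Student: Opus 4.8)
The plan is to work throughout with the sequential exploration construction of the directed configuration model recalled above, in which out-half-edges are matched one at a time to a uniformly chosen as-yet-unmatched in-half-edge, and to establish both parts by elementary first- and second-moment estimates followed by a union bound over the $N$ possible starting vertices $v$. The only structural inputs needed are the degree bound~(C2), the convergence of the empirical in-degree distribution (Lemma~\ref{thm_1}), and the dichotomy of Proposition~\ref{prop:F1}.

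For part~(i), call a step of the forward exploration \emph{bad} if the out-half-edge being processed is matched to an in-half-edge belonging to an already discovered vertex, so that no new vertex is added. Conditionally on the history after $t$ steps, at most $t+1$ vertices have been discovered, hence at most $(t+1)\Delta^N$ in-half-edges belong to them, while at least $2N-t$ in-half-edges remain unmatched; thus the conditional probability that step $t+1$ is bad is at most $(t+1)\Delta^N/(2N-t)$. Writing $T:=(\Delta^N\log N)^2$ and using~(C2), one has $T\le N^{1/6}$ and each of these conditional probabilities is of order at most $N^{-3/4}$ throughout the first $T$ steps. Summing the conditional estimate over all pairs $s<t\le T$ bounds the probability of two or more bad steps by a quantity of order $T^2N^{-3/2}\le N^{-7/6}$, and a union bound over the $N$ starting vertices gives $N\cdot N^{-7/6}=N^{-1/6}\to0$, which is part~(i). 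Note that part~(i) in turn forces every forward exploration to run for its first $T\gg A_0(\Delta^N)^2\log N$ steps while adding (essentially) one new vertex per step, so that every vertex has a large fan-out whp; consequently the set $L:=\{u:\ u\text{ has a large fan-in}\}$ coincides with the giant component, and $|L|=x^*N(1+o(1))$ by Theorem~\ref{thm:main}(i).

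For part~(ii) I would first identify $R^+(v)$, when large, with $L$ up to $o(N)$ vertices: Proposition~\ref{prop:F1}(ii) gives $R^+(v)\supseteq L$ (if $z\in R^+(v)\cap R^-(u)$ then $v\to z\to u$, so $u\in R^+(v)$), while Proposition~\ref{prop:F1}(i) gives $|R^+(v)|=x^*N(1+o(1))=|L|(1+o(1))$, so the two sets differ by $o(N)$ vertices. Since $R^+(v)$ is forward-closed, all $2|R^+(v)|$ of its out-half-edges are matched internally, whence the number of edges entering $R^+(v)$ from its complement equals $\sum_{w\in R^+(v)}d^-_w-2|R^+(v)|$. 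The backward exploration from a vertex $w$ with $d^-_w=k$ splits into $k$ independent Galton--Watson subtrees whose offspring law is the in-degree distribution, which is close to Poisson$(2)$ by Lemma~\ref{thm_1}; hence $w$ has a large fan-in with probability close to $1-(1-x^*)^k$. Replacing the random set by $L$ and using the convergence of the empirical in-degree distribution yields
\[
\sum_{w\in R^+(v)}d^-_w\;\approx\;N\,\E\big[D\big(1-(1-x^*)^D\big)\big],\qquad D\sim\mathrm{Poisson}(2).
\]
The generating-function identity $\E[Ds^D]=2s\,e^{2(s-1)}$ together with the fixed-point relation $e^{-2x^*}=1-x^*$ gives $\E[D(1-x^*)^D]=2(1-x^*)^2$, so that $\sum_{w\in R^+(v)}d^-_w\approx 2N(1-(1-x^*)^2)$; combined with $|R^+(v)|\approx x^*N$ this gives
\[
\sum_{w\in R^+(v)}d^-_w-2|R^+(v)|\;\approx\;2N\big(1-(1-x^*)^2\big)-2x^*N\;=\;2x^*(1-x^*)N,
\]
as claimed.

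The routine ingredients --- the conditional probability bounds, the union bounds, and the generating-function algebra --- are straightforward; the genuine work, where one must follow~\cite{CF04} closely, lies in making the two approximations of part~(ii) rigorous. First, the statement that a vertex of in-degree $k$ has a large fan-in with probability close to $1-(1-x^*)^k$ has to be controlled uniformly, tracking the joint law of a vertex's in-degree and the survival of its backward exploration. Second, because $R^+(v)$ is a random vertex set measurable with respect to the matching, the sum $\sum_{w\in R^+(v)}d^-_w$ cannot be evaluated naively; it must be decoupled from the event $\{w\in R^+(v)\}$ using the step-by-step independence of the configuration-model exploration, the $o(N)$ bound on the symmetric difference $R^+(v)\,\triangle\,L$, and a second-moment estimate giving concentration of $\sum_{w\in L}d^-_w$ about its mean. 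This last concentration step, handling the correlations induced by the multinomial in-degrees (as in Lemma~\ref{thm_1}), is the main obstacle.
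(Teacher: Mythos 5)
Your proposal is correct, and for part (i) it is essentially the paper's own proof (which itself just recalls \cite{CF04}): the conditional probability that a given exploration step hits an already discovered vertex is bounded by $\Delta^N s/(2N-s)$, and one concludes by a union bound over pairs of steps and over the $N$ starting vertices; your write-up is in fact slightly more careful, since you retain the factor of order $T^2$ from counting pairs of steps, which the paper's displayed bound drops (harmlessly, as your estimate $N\cdot T^2N^{-3/2}\to 0$ confirms). For part (ii) your route is genuinely different in an instructive way. The paper proves nothing structural here: it quotes from the proof of \cite[Lemma 4.3]{CF04} that the number of entering edges is $CN(1+o(1))$ with $C=2\bigl(1-x^*-\tfrac12\sum_{k\ge 0}k\cP^2_k(1-x^*)^k\bigr)$, and contributes only the algebra, namely the fixed-point identity $e^{-2x^*}=1-x^*$ giving $C=2x^*(1-x^*)$. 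You instead reconstruct the constant by the dual count: since $R^+(v)$ is forward-closed with constant out-degree $2$, the entering edges number $\sum_{w\in R^+(v)}d^-_w-2|R^+(v)|$, and combining $R^+(v)\approx L=S^N$, the heuristic $\P\{\text{large fan-in}\mid d^-=k\}\approx 1-(1-x^*)^k$, and the same generating-function algebra, you land on $2x^*(1-x^*)N$; the paper's quoted $C$ is the complementary count (out-edges of the complement minus those staying inside the complement), and the two agree. Your appeal to Theorem~\ref{thm:main}(i) is not circular, since the paper proves that via \cite[Theorem 1.2]{CF04} independently of this lemma (alternatively, Proposition~\ref{prop:F1}(i) with the forward-closedness observation gives $|R^+(v)|=x^*_NN(1+o(1))$ directly). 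The caveats you flag at the end are exactly where the real work lies: uniform control of the fan-in probability, decoupling the random set $R^+(v)$ from the in-degrees (note that bounding the contribution of the symmetric difference $R^+(v)\,\triangle\,L$ requires something like (C1) plus Cauchy--Schwarz, since $o(N)\cdot\Delta^N$ need not be $o(N)$), and concentration of the in-degree sums; this is precisely the content of the cited lemmas of \cite{CF04}, which the paper imports by citation rather than reproving. So your derivation of the constant is a legitimate alternative to the paper's, and what the citation buys the paper is exactly the rigor you identify as outstanding.
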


\begin{proof}
(i) For the reader's convenience, we recall the proof from~\cite{CF04}.
Fix a vertex~$v$ and denote by $A(s)$ the vertices discovered and by $U^-(s)$ the in-half-edges used
after $s$ steps of the forwards exploration
starting from vertex $v$. Then, define 
$$
J(s) = \bigcup_{u \in A(s)} W_u^- \setminus U^-(s)
$$
as the set of  unused in-half-edges incident to vertices in $A(s)$. 
The event that we form an edge in step $s+1$ pointing  towards a previously discovered
vertex corresponds to connecting the next out-half-edge 
with an in-half-edge in $J(s)$. The latter has probability
\[ 
\frac{|J(s)|}{2N - s} \leq \frac{\Delta^N s}{2N -s} \leq \frac{ (\Delta^N)^3 \log^2 N }{2N - (\Delta^N \log N)^2}, 
\]
provided $s \leq (\Delta^N \log N)^2$. 
In particular, it follows that the probability that during the first $(\Delta^N \log N)^2$ steps of the exploration
this event happens at least twice is bounded from above by
\[ \Big( \frac{ (\Delta^N)^3 \log^2 N }{2N - (\Delta^N \log N)^2} \Big)^2 . \]
This expression still converges to $0$ after summing over all possible starting vertices~$v$.

(ii) In the proof of~\cite[Lemma 4.3]{CF04} it is shown that the number of edges pointing towards
$R^+(v)$ from its complement is of order $C N(1+o(1))$, where
\[\bal C &  := 2 \Big( 1 -x^* - \frac 12  \sum_{k \geq 0} k \cP_k^2 (1-x^*)^k \Big)\\
& =
 2 \Big( 1 -x^* - \frac 12  \sum_{k \geq 0} k e^{-2} \frac{2^k}{k!} (1-x^*)^k \Big)\\
 & =
 2 ( 1 -x^*)\Big( 1 - \sum_{k \geq 0} \cP_k^2(1-x^*)^k\Big) = 
2 x^* (1-x^*),
\eal  \]
where we used in the last step that $1-x^*$ is by definition a fixed point of the probability generating function of $(\cP_k^2)$.
\end{proof}

In particular, Lemma~\ref{le:F2} implies that with high probability, the forward exploration process of each $v$
produces at most one cycle (even in the undirected sense) after the first $(\Delta^N \log N)^2$ steps.

The statement (ii) of Theorem~\ref{thm:main} now follows from the next two Propositions~\ref{prop:ext}
and~\ref{prop:new}. The first proposition collects some straight-forward consequences of the construction
of the giant strongly connected component in~\cite{CF04}.

\begin{prop}\label{prop:ext}
For any degree sequence that is proper and has constant out-degree $2$, 
with high probability, $\DCM_N$ with largest strongly connected component $S^N$ 
satisfies:
\begin{itemize}
\item[(i)]  $S^N$ consists of all vertices with a large fan-in.
\item[(ii)] For any vertex $v \notin S^N$ there is a directed path from 
$v$ to $S^N$. 
\item[(iii)] There are no directed edges leading away from $S^N$.
\end{itemize}
\end{prop}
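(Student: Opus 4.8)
The plan is to reduce all three assertions to a single preliminary observation: under the standing assumptions, with high probability \emph{every} vertex has a large fan-out. This follows by combining Lemma~\ref{le:F2}(i) with the dichotomy in Proposition~\ref{prop:F1}(i). Fix a vertex $v$ and run the forward exploration. By Lemma~\ref{le:F2}(i), among the first $(\Delta^N \log N)^2$ steps at most one fails to discover a new vertex; since every discovered vertex contributes two fresh out-half-edges while each step consumes only one, the supply of unused out-half-edges grows throughout this phase, so the exploration genuinely runs for all $(\Delta^N\log N)^2$ steps and uncovers at least $(\Delta^N\log N)^2-1$ vertices. As $(\Delta^N\log N)^2=(\Delta^N)^2\log^2 N$ exceeds the smallness threshold $A_0(\Delta^N)^2\log N$ once $N$ is large, the fan-out cannot be small, and Proposition~\ref{prop:F1}(i) forces it to be large. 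Because the event in Lemma~\ref{le:F2}(i) holds simultaneously for all starting vertices, \whp{} every vertex has a large fan-out.

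Granting this, I would prove (i) as follows. Let $L$ be the set of vertices with a large fan-in. For any vertex $u$ and any $v\in L$, the vertex $u$ has a large fan-out and $v$ a large fan-in, so Proposition~\ref{prop:F1}(ii) gives $R^+(u)\cap R^-(v)\neq\emptyset$ and hence a directed path from $u$ to $v$. Applied in both directions to an arbitrary pair of vertices in $L$, this shows that $L$ is strongly connected and therefore lies inside a single strongly connected component. Conversely, every $w\in S^N$ satisfies $R^-(w)\supseteq S^N$; since $|S^N|$ is of order $x^*_N N$ and (by \eqref{fourth} in Lemma~\ref{proper}) $A_0(\Delta^N)^2\log N\le A_0\log^3 N$, we have $|R^-(w)|\geq|S^N|\gg A_0(\Delta^N)^2\log N$, so $w$ has a large fan-in, i.e.\ $S^N\subseteq L$. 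Combining $S^N\subseteq L$ with the maximality of $S^N$ and the fact that the strongly connected set $L$ sits inside one component yields $L=S^N$, which is exactly~(i).

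Parts (ii) and (iii) are then immediate. For (ii), any $v\notin S^N$ has a large fan-out while $S^N=L$ consists of large-fan-in vertices, so Proposition~\ref{prop:F1}(ii) produces a directed path from $v$ into $S^N$. For (iii), suppose an edge ran from some $u\in S^N$ to some $w\notin S^N$. Then $S^N$ reaches $w$ through this edge, while $w$, having a large fan-out, reaches $S^N$ by the argument just used; hence $w$ would be strongly connected to $S^N$ and would lie in $S^N$, a contradiction.

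The only genuinely delicate point is the preliminary claim that every vertex has a large fan-out, and in particular the bookkeeping showing that the essentially collision-free $(\Delta^N\log N)^2$ steps supplied by Lemma~\ref{le:F2}(i) already push the fan-out past the threshold $A_0(\Delta^N)^2\log N$; once this is in place, everything else is a short deduction from Proposition~\ref{prop:F1} and the maximality of $S^N$. I would also take care to note that the events from Lemma~\ref{le:F2} and Proposition~\ref{prop:F1}, together with Lemma~\ref{proper}, all hold simultaneously \whp{}, so the \whp{} qualifier is preserved uniformly across the three claims.
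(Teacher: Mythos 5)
Your proposal is correct in substance and shares its core mechanics with the paper's proof: both first show that \whp{} \emph{every} vertex has a large fan-out (your half-edge bookkeeping is essentially the contrapositive of the paper's argument, which supposes the exploration of $R^+(v)$ terminates within $A_0(\Delta^N)^2\log N$ steps and derives a contradiction from Lemma~\ref{le:F2}(i) and the constant out-degree $2$), and both then use Proposition~\ref{prop:F1}(ii) to show that the set $L$ of large-fan-in vertices is strongly connected. The genuine divergence is in how $L$ gets identified with $S^N$. You import the fact that $|S^N|$ is of order $x_N^*N$; this must be explicitly attributed to \cite[Theorem~1.2]{CF04} (the result already invoked for Theorem~\ref{thm:main}(i)), since nothing proved up to that point in your argument gives \emph{any} lower bound on $|S^N|$ — without that citation the step $S^N\subseteq L$ is a hole, with it the step is fine. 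The paper instead stays self-contained: it shows that any out-neighbour of a large-fan-in vertex again has a large fan-in, hence $R^+(v)=\tilde S^N$ for $v\in \tilde S^N$ (which yields (iii) directly), reads off $|\tilde S^N|=x^*N(1+o(1))$ from the edge-count dichotomy via the two-to-one map sending each edge of $R^+(v)$ to its starting vertex, and finally rules out any other strongly connected component of linear size; at no point does it presuppose knowledge of $|S^N|$. Your proof of (iii) (an escaping edge would make its endpoint mutually reachable with $S^N$, contradicting maximality) also differs from the paper's but is equally valid.

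Two smaller slips to repair. First, you invoke \eqref{fourth} from Lemma~\ref{proper} to get $\Delta^N\le\log N$; that bound concerns the \emph{random} degree sequence of $\WCM_N$, whereas Proposition~\ref{prop:ext} is stated for an arbitrary deterministic proper sequence, where only (C2) is available, i.e.\ $\Delta^N\le N^{1/12}/\log N$. This is harmless — (C2) still gives $A_0(\Delta^N)^2\log N=O(N^{1/6})=o(N)$, which is all you need — but the citation as written is not applicable. Second, the dichotomy in Proposition~\ref{prop:F1}(i), and hence the notion of large/small fan-in, is formulated in terms of the number of \emph{edges} in $R^-(w)$, not vertices; to convert $|R^-(w)|\ge |S^N|$ into an edge bound you should note, as the paper does, that every vertex of $R^-(w)\setminus\{w\}$ contributes at least one edge of the induced subgraph, so the edge count is at least $|S^N|-1$.
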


\begin{proof} We adapt the proof of Corollary 4.4 in~\cite{CF04} to our special case.
Let $\cD_N$ be the set of digraphs that satisfy the conclusions of Proposition~\ref{prop:F1}
and Lemma~\ref{le:F2}. Then $\p (\DCM_N \in \cD_N) \ra 1$ as $N \ra \infty$, 
so it suffices to prove statements (i)-(iii) for any digraph $\DCM_N \in \cD_N$.

We first show that if $\DCM_N \in \cD_N$, then any vertex $v$ has a large fan-out.
Suppose that for a vertex $v$ in $\DCM_N$ the forward exploration $R^+(v)$ terminates after
$s$ steps and $s \leq A_0 (\Delta^N)^2 \log N$. 
Then, by Lemma~\ref{le:F2} at least $s$ distinct vertices (including the starting vertex $v$)
were discovered, 
and since each vertex has out-degree $2$, for the exploration to be completed 
at least $2s$ out-half-edges have
to have been connected.
This contradicts the construction of the exploration process, where in each step only a single edge is created.
Thus, by  the dichotomy in Proposition~\ref{prop:F1}(i),
any  vertex $v$ in the graph has to have a large fan-out.

Define $\tilde S^N$ as the set of all vertices with a large fan-in.
We claim that $\tilde S^N$ is a maximal strongly connected set.
Indeed, we know that any vertex $u$ (and so in particular any vertex in $\tilde S^N$)
has a large fan-out. So  since any $v \in \tilde S^N$ has a large fan-in, 
 by Proposition~\ref{prop:F1}(ii) 
it follows that $R^+(u) \cap R^-(v) \neq \emptyset$ and therefore
  $u$ is connected to $v$.
  This implies 
 $\tilde S^N$ is strongly connected and also statement (ii) once we have shown 
that the largest strongly connected component $S^N$ is equal to $\tilde S^N$.

Furthermore, if there is an edge connecting $\tilde S^N$ to some vertex $w$, then
$w$ has a large fan-in (it has an incoming edge from a vertex with a large fan-in). Therefore, 
$w \in \tilde S^N$. 
This implies that $R^+(v) \subseteq \tilde S^N$ for any $v \in \tilde S^N$ and also that 
$\tilde S^N$ is maximal strongly connected. 
Furthermore, this shows claim (iii) of the proposition (again  once we have established
that $\tilde S^N = S^N$).

Now, for $v\in \tilde S^N$ we always have $\tilde S^N \subseteq R^+(v)$  so that we can 
 deduce from the previous step that  $\tilde S^N = R^+(v)$.
Note that, since the out-degree is always $2$,  the mapping that associates to each directed edge in 
(the subgraph induced by) $R^+(v)$ the starting vertex of the edge is two-to-one. 
 Thus, we can deduce from the dichotomy of Proposition~\ref{prop:F1} that 
$|\tilde S^N| = |R^+(v)| = x^* N (1+o(1))$ for any $v \in \tilde S^N$.

 Suppose that there exists a further strongly connected component 
$\hat S^N$, say of size $|\hat S^N| \geq \eps N + 1$ for 
some $\eps > 0$. Let $v \in \hat S^N$. 
Again, for any vertex $w \in R^-(v)\setminus \{v\}$ there is at least one edge in (the 
induced subgraph) of $R^-(v)$ with starting vertex~$w$, therefore
the number of edges in $R^-(v)$ is bounded from below by $ |R^-(v)| -1 \geq |\hat S^N|-1 \geq \eps N$. 
By the dichotomy of Proposition~\ref{prop:F1}, the fan-in $R^-(v)$ is large
so that $v \in  \tilde S^N$ 
and thus $\hat S^N = \tilde S^N$.
In particular, we have seen that the largest strongly connected component $S^N$
is necessarily equal to $\tilde S^N$, proving (i) and thus completing the proof.\end{proof}

The final proposition considerably extends the arguments of~\cite{CF04} and gives rather explicit quantitative results about the complement of the  giant component. Here, we make extensive use of the  fact that in our special case the out-degree in $\DCM_N$ is always $2$.

\begin{prop}\label{prop:new}
For any degree sequence that is proper and has constant out-degree $2$, 
with high probability, $\DCM_N$ with largest strongly connected component $S^N$ 
satisfies:
\begin{itemize}
\item[(i)] For any vertex $v \notin S^N$, the length of the shortest path from $v$ leading into $S^N$
is at most $\frac{\log \log N }{\log 2}(1+o(1))$.

\item[(ii)] For any vertex $v \notin S^N$, $|R^+(v) \setminus S^N| \leq \frac{2}{-\log(4x^*(1-x^*))} \log N (1+o(1))$.
\item[(iii)] The number of vertices in the second largest strongly connected component is bounded
by $\frac{2}{-\log(4x^*(1-x^*))} \log N (1+o(1))$.
\end{itemize}
\end{prop}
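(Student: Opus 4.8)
The plan is to reduce all three parts to a single local computation about the forward exploration of a vertex, exploiting two facts established earlier: by Proposition~\ref{prop:ext}(i) the giant component $S^N$ is exactly the set of vertices with a \emph{large fan-in}, and by Lemma~\ref{le:F2}(i) the breadth-first forward exploration of any vertex is tree-like (at most one collision) throughout its first $(\Delta^N\log N)^2$ steps; since the out-degree is always $2$, this exploration is a genuine binary tree up to the relevant scale. The quantity I would isolate first is the probability that a single forward step out of a small-fan-in (i.e.\ complement) vertex again lands in the complement. A vertex $w$ reached by a forward step is found with probability proportional to its in-degree, so its in-degree is size-biased, equal to $k$ with probability $\tfrac12 k\,\cP_k^2$; since we arrived along one of $w$'s in-edges, emanating from a small-fan-in vertex (whose backward branch is therefore automatically finite), $w$ itself has a small fan-in precisely when each of its remaining $k-1$ backward branches is finite, which—these branches resembling independent Poisson$(2)$ trees, each extinct with probability $1-x^*$—has probability $(1-x^*)^{k-1}$. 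Averaging then gives $\P\{w\notin S^N\}=\tfrac12\sum_{k\ge1}k\,\cP_k^2(1-x^*)^{k-1}=\tfrac12 f'(1-x^*)=1-x^*$, where $f$ is the generating function in \eqref{eq:PoisPGF} and I used that $1-x^*$ is its fixed point, so $f'(1-x^*)=2e^{2(1-x^*)-2}=2(1-x^*)$.

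For parts (ii) and (iii) I would argue as follows. By the previous step, the portion of $R^+(v)$ lying outside $S^N$ is (asymptotically) the family tree of a Galton--Watson process with $\mathrm{Bin}(2,1-x^*)$ offspring, which is subcritical since its mean is $2(1-x^*)<1$. Exploring this tree by matching out-half-edges one at a time yields a random walk whose height increases by one when a step reaches a fresh complement vertex (probability $1-x^*$) and decreases by one otherwise; the complement excursion ends when the walk returns to $0$. A ballot/hitting-time estimate shows that this excursion has length exceeding $z$ with probability of order $\big(2\sqrt{x^*(1-x^*)}\big)^{z}=(4x^*(1-x^*))^{z/2}$, and $|R^+(v)\setminus S^N|$ is at most the excursion length. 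A union bound over the at most $N$ possible starting vertices then forces the length, hence $|R^+(v)\setminus S^N|$, below $\tfrac{2}{-\log(4x^*(1-x^*))}\log N\,(1+o(1))$ with high probability, which is part (ii). Part (iii) is then immediate: any strongly connected component $C\neq S^N$ is contained in the complement, so choosing $v\in C$ we have $C\subseteq R^+(v)\setminus S^N$ and the same logarithmic bound applies to $|C|$.

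For part (i) the point is that reaching $S^N$ slowly is much more costly than merely having a large complement cluster. Indeed, $\mathrm{dist}(v,S^N)>\ell$ means that \emph{every} vertex within graph-distance $\ell$ of $v$ avoids $S^N$; since the forward exploration is a binary tree, these are the $2^{\ell+1}-1$ vertices of the full depth-$\ell$ tree, and each of its $2^{\ell+1}-2$ edges must land in the complement. In the tree-like regime these events are asymptotically independent, each of probability $1-x^*$, so $\P\{\mathrm{dist}(v,S^N)>\ell\}\approx(1-x^*)^{2^{\ell+1}}$, which is doubly exponentially small in $\ell$. A union bound gives $\P\{\exists\,v\notin S^N:\mathrm{dist}(v,S^N)>\ell\}\le N(1-x^*)^{2^{\ell+1}}\to0$ as soon as $2^{\ell+1}\gg\log N$, that is for $\ell\ge\tfrac{\log\log N}{\log 2}(1+o(1))$, giving part (i). This is also consistent with part (ii): a depth-$\ell$ complement binary tree already contains $2^{\ell+1}-1$ vertices, so the $O(\log N)$ bound on $|R^+(v)\setminus S^N|$ caps $\ell$ at $\log_2\log N\,(1+o(1))$.

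The hard part will be turning the heuristic picture of a ``binary tree with independent $S^N$/complement labels'' into estimates that are rigorous and uniform in $v$. Three points need care. First, that the forward exploration really branches like a binary tree up to the required size and depth, i.e.\ that collisions are negligible: this is where Lemma~\ref{le:F2}(i) and the high-probability bound $\Delta^N\le\log N$ established in the proof of Lemma~\ref{proper} enter, since both $2^{\ell+1}$ and the logarithmic cluster sizes are $\ll(\Delta^N\log N)^2$. Second, that membership in $S^N$, which is a \emph{global backward} property, decouples across freshly discovered vertices well enough that the single-edge probability $1-x^*$ and the independence used above hold up to $o(1)$ errors, despite the forward exploration and the backward fan-in sharing the half-edges at each vertex. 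Third, that the $O(\Delta^N\sqrt{N\log N})$ error terms carried by Proposition~\ref{prop:F1} and Lemma~\ref{le:F2} do not perturb the exponential rate $4x^*(1-x^*)$. Controlling these correlations and error terms, and then lifting the per-vertex branching-process tail bounds to a statement holding simultaneously for all $v$ via union bounds, is the substantive part of the argument.
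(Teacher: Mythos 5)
Your outline has the same skeleton as the paper's actual proof: part (i) via the tree-like forward exploration of Lemma~\ref{le:F2}(i) (depth $\ell$ converted into $\approx 2^{\ell+1}$ exploration steps, each avoiding the giant with constant probability, then a union bound), part (ii) via coupling the complement portion of $R^+(v)$ with a subcritical $\mathrm{Bin}(2,p)$, $p\approx 1-x^*$, Galton--Watson tree whose total-progeny tail decays at rate $-\log(4x^*(1-x^*))$ (the paper does this through the Otter--Dwass formula and Cram\'er's theorem, which is the same computation as your ballot/excursion estimate), and part (iii) reduced to (ii) exactly as the paper does. Your constants also all check out: $f'(1-x^*)=2(1-x^*)$, the per-step escape probability $1-x^*$, and subcriticality since $x^*>\tfrac12$. (A curiosity: you recover the factor $2$ in the bound because you count excursion steps, i.e.\ edges, rather than vertices; the paper recovers it because its union bound runs over \emph{pairs} of vertices and so needs per-pair failure probability $o(N^{-2})$. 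Both routes land on the stated constant.)

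However, there is a genuine gap, and it is precisely the one you defer to at the end: the assertion that the $S^N$-versus-complement labels of freshly discovered vertices are ``asymptotically independent'' with parameter $1-x^*$ is not an error term to be controlled --- it is the step that requires a different device, and your proposal does not contain it. Membership in $S^N$ is a global backward property; the fan-ins of a parent and its child overlap, and the backward trees share half-edges with the forward exploration you are running, so there is no direct way to multiply factors of $(1-x^*)$, nor does your size-biased fan-in computation (nice as it is for identifying the constant) become rigorous on its own. The paper's resolution is to never condition on $S^N$ at all: by Proposition~\ref{prop:ext}(iii), whp $R^+(u)\subseteq S^N$ for every $u\in S^N$, so one may replace $S^N$ by $R^+(u)$ for a \emph{deterministic} vertex $u$ and sum over all pairs $(u,v)$. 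For a fixed pair, one explores $R^+(u)$ first; on the high-probability event supplied by Proposition~\ref{prop:F1}(i) and Lemma~\ref{le:F2}(ii), the number of unused in-half-edges incident to $R^+(u)$ is $\approx 2x^*(1-x^*)N$ and the number of in-half-edges consumed is $\approx 2x^*N$, so the subsequent sequential uniform matching in the exploration of $R^+(v)$ hits $R^+(u)$ at each step with conditional probability bounded below by a constant ($\geq x^*(1-x^*)(1-\delta)$ suffices for (i), and $\approx x^*$ for (ii)), \emph{independently of the past of that exploration}. This is what turns your heuristic product $(1-x^*)^{2^{\ell+1}}$ and your excursion tail into honest bounds, at the price that each must be $o(N^{-2})$ rather than $o(N^{-1})$. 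Supplying this conditioning scheme (or an equivalent decoupling mechanism) is exactly the ``substantive part'' you acknowledge but leave open, so as written the proposal is a correct blueprint rather than a proof.
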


\begin{proof}
(i) Let $\eps > 0$ and define  $\ell = (1+\eps) \frac{\log \log N}{\log 2}$.
Suppose that there exists a vertex $v$ such that every path of length $\ell$ in $R^+(v)$ starting in $v$
does not intersect with the giant strongly connected component $S^N$. 

Let $s_\ell = 2(2^{\ell} -1)$.  Denote by $A_v(s_\ell)$ the vertices found in the forward exploration (in a breadth-first way) up to step $s_\ell$ starting from $v$.
Every vertex has out-degree $2$ and by Lemma~\ref{le:F2}(i) in all but possibly one of the first $s_\ell$  steps a previously unseen vertex is discovered in the exploration.
Therefore, if we omit the possible edge leading to an already discovered vertex, we can embed the remaining graph 
induced by $A_v(s_\ell)$
into a binary tree of height $\ell$ (which necessarily has $2(2^{\ell} -1)$ edges).
In particular, each vertex $w \in A_v(s_\ell)$ can be reached by a path of length $\leq \ell$ and therefore
by assumption $w \notin S^N$ and $A_v(s_\ell)$ and $S^N$ are disjoint.
Since for each $u \in S^N$  Proposition~\ref{prop:ext}(iii) implies that $R^+(u) \subseteq S^N$,
we can fix any such $u$ so that $A_v(s_\ell)$ and $R^+(u)$ are disjoint.

Let $\delta \in (0,1)$. From~Lemma~\ref{le:F2}(ii), we can assume that the number of edges pointing towards $R^+(u)$ is bounded from below by  $2Nx^*(1-x^*) (1-\delta)$.
Also the  number of available in-half-edges after the exploration of $R^+(u)$ is trivially less than $2N$.
Therefore, the probability that at a certain stage in the exploration of $R^+(v)$ we connect to an in-half-edge incident to $R^+(u)$
-- provided we have not connected before -- is bounded from below by
\[ 
\frac{2 N x^*(1-x^*) (1-\delta)}{2 N} = x^*(1-x^*)(1-\delta). 
\]
Now,  if $A_v(s_\ell)$ and $R^+(u)$ are disjoint, then we have avoided connecting to $R^+(u)$ for the first $s_\ell$ steps
of the exploration of $R^+(v)$. Therefore, the probability of the latter event is bounded from above by
\begin{equation*}
\label{eq:2702-1} 
( 1  - x^*(1-x^*)(1-\delta))^{s_\ell} 
= ( 1  - x^*(1-x^*)(1-\delta))^{2 (( \log N)^{1+\eps} -1)} =  o\Big(\frac{1}{N^2}\Big). 
\end{equation*}
In particular the probability that $A_v(s_\ell)$ and $R^+(u)$ are disjoint still tends to zero after summing over all possible vertices $u,v$.

(ii)
Let  $s = (1+\eps) \frac{2}{- \log (4 x^*(1-x^*))} \log N$ for some $\eps \in (0,1)$. 
Note that by Proposition~\ref{prop:ext}(iii) with high probability for any $u \in S^N$, we have $R^+(u) \subseteq S^N$.
Hence, 
\[ 
\begin{aligned}  \p \{ \exists v \notin S^N & \, : \, | R^+(v) \cap (S^N)^c| \geq s \} \\
& \leq \p \{ \exists u,v \, : \, v \notin R^+(u) \mbox{ and } |R^+(v) \cap R^+(u)^c | \geq s \}  + o(1). 
\end{aligned}
\]
Now, fix vertices $u,v$ such that $v \notin R^+(u)$.
We construct a tree $\calT$ rooted at $v$ as follows. We match the first out-half-edge
to one of the remaining in-half-edges. If the chosen in-half edge is incident to a vertex that is neither in $R^+(u)$ nor equal to $v$, 
we add it to the tree $\calT$. Then, we find a partner for the second out-half-edge and only add the corresponding vertex to the tree, if it connects to a previously 
unseen vertex outside $R^+(u)$. We continue this exploration in a breadth-first way and keep adding vertices to $\calT$ only if they are neither in $R^+(u)$ nor have already been explored. In particular, the tree 
is constructed in such a way that  $|R^+(v) \setminus R^+(u)|$
is equal to the number $|\calT|$ of vertices in $\calT$. 

Let $\delta > 0$, which we will later choose small depending on $x^*$ and $\eps$.
Denote by $\widehat W^-(R^+(u))$ the in-half-edges incident to  $R^+(u)$ that have not been used after $R^+(u)$ has been explored
and denote by  $|E(R^+(u))|$ the total number of edges used in the exploration of $R^+(u)$.
If we set 
\[ \calE(u) = \Big\{ |\widehat W^-(R^+(u))| \geq (2x^*(1-x^*) - \delta)N , |E(R^+(u))| \geq (2x^*-\delta)N \Big\}, \]
we know from
Lemma~\ref{le:F2} and Proposition~\ref{prop:F1}(i) that 
\begin{equation}
\label{eq:p}
\p\Big( \bigcap_{w } \calE(w) \Big) \ra 1,
\end{equation}
so that we can assume that $\calE(u)$ holds.

Moreover, we again let $A_v(k)$ be the vertices found in the first $k$ steps of the exploration starting from $v$.
Then, the probability that we do not add a vertex to the tree $\calT$ in the $k$th step of the exploration of $R^+(v)$
is bounded from below by
\[ 
\frac{ | \widehat W^-(R^+(u)) | + |\bigcup_{w \in A_v(k-1)}  W^-_w | - (k-1)  }{2N - |E(R^+(u))| - (k-1) }, 
\]
where the $k$ in the denominator is an upper bound on the in-half-edges already connected to
either $R^+(u)$ or $A_v(k-1)$ by step $k-1$ and we recall that $W^-_w$ denotes
the set of in-half-edges of vertex $w$. If $k \leq \sqrt{N}$, this expression is by \eqref{eq:p} bounded from below by 
\[  \frac{ 2x^*(1-x^*) - \delta - \frac{1}{\sqrt{N}}}{ 2 - 2x^* + \delta }
\geq  \frac{ 2x^*(1-x^*) -2 \delta}{ 2 - 2x^* + \delta } =: 1-p, \]
provided $N$ is large. Since $x^* > \frac 12$,  we can assume that  $\delta$ is chosen small enough such that
 $p < \frac 12$.

Let $\widehat \calT$ be a sub-critical Galton-Watson tree, where each individual has offspring distribution given
by the distribution of the sum of two independent Bernoulli variables with success probability $p$ each. 
Thus, by the above estimates and the independence structure of the configuration model, we can couple 
$\calT$ and $\widehat \calT$ such that with probability $1$ the graph induced by the first  $\lfloor \sqrt{N}/2 \rfloor$ 
vertices (counted breadth-first) of $\calT$ is a subgraph of $\widehat \calT$. Hence, by construction we can deduce that since for $N$ large, 
$s \leq \lfloor \sqrt{N}/2 \rfloor$ 
\begin{equation}
\label{eq:0905-1} 
\p \{ |R^+(v) \setminus R^+(u)| \geq s , \calE(u)\} \leq
 \P \{ |\calT| \geq s, \calE(u) \} \leq \p \{ | \widehat \calT | \geq s \} . 
 \end{equation}
However, the distribution of the total number of individuals in a Galton-Watson tree is well-understood. Indeed, the Otter-Dwass formula, see e.g.~\cite[Sec.\ 6.2]{Pitman06}, states that
\begin{equation}\label{eq:2805-1} 
\p \{ | \widehat \calT | = n \} = \frac{1}{n} \p\{ S_n = n - 1\} , 
\end{equation}
where $S_n = \sum_{i=1}^n X_i$ and $(X_i)_{i \geq 1}$ is an i.i.d.\ sequence with the same distribution as the number of offspring in the Galton-Watson
process.  Note that in our case $S_n$ has the  same distribution as the sum of  $2n$ independent Bernoulli variables with parameter $p$. 
Thus, if we denote by 
\[ 
I_p(a) = a \log \Big( \frac{a}{p} \Big) + (1-a) \log \Big( \frac{1-a}{1-p} \Big), 
\]
the large deviation rate function for a Bernoulli variable, Cram\'er's theorem tells us that since $p < \frac 12$,
\[ 
\p \{ S_n = n-1 \} =
\p \Big\{ S_n = 2n\Big(\frac{1-\frac1n}{2}\Big) \Big\}
\leq e^{- 2n I_p( \frac{1}{2} (1 - \frac{1}{n}))}  = e^{- n (-  \log (4 p(1-p)) + o(1))} . 
\]
Thus, we can deduce from~\eqref{eq:0905-1} and~\eqref{eq:2805-1}, 
\begin{align}
\label{eq:0905-2}
\p \{ |R^+(v) \setminus R^+(u)| \geq s , \calE(u)\} 
	&\leq \sum_{n \geq s} \p \{ |\widehat{\cal{T}}| = n\}\notag \\
	&\leq \frac{1}{s} \sum_{n \geq s}  e^{-n (- \log (4 p(1-p)) + o(1))}   
\end{align}
Now, we choose $\delta$ small enough 
 such that $p = p(\delta)$ satisfies
\[  \log 4 p(1-p) \leq \frac{\log 4 x^*(1-x^*) }{1+\frac 12 \eps} . \]
Then, it follows from~\eqref{eq:0905-2} and our choice of $s = (1+\eps) \frac{2}{-\log (4x^*(1-x^*))} \log N$ that
\[ \begin{aligned} \p \{ |R^+(v) \setminus R^+(u)| \geq s , \calE(u)\}   & \leq 2
 e^{-s (- \log (4 p(1-p)) + o(1))} \\ &\leq N^{ - 2 ( \frac{1+\eps}{1+  \eps/2} + o(1)) } 
  \leq N^{- 2( 1 + \frac 14 \eps + o(1))} , \end{aligned}
\]
Finally, since this expression tends to zero even after summing over all possible pairs
of vertices  $u$ and $v$,  claim (ii) follows.

(iii) If $\calC$ is the second largest strongly connected component of $\DCM_N$, then
 $\calC \subseteq (S^N)^c$.
Moreover, 
$\calC \subseteq R^+(v)$ for any 
$v \in \calC$ and thus $\calC \subseteq R^+(v) \setminus S^N$ for some
vertex $v \notin S^N$. Therefore, the required bound on $|\calC|$ follows
from (ii).
\end{proof}

{\bf Acknowledgement.} JB and MO gratefully acknowledge support by DFG Priority Programme 1590 ``Probabilistic Structures in Evolution''.

%
\bibliographystyle{alpha}
\bibliography{cyclical}

\end{document}